\newif\ifpdf
\newcommand{\Monge}{Monge-Amp\`{e}re }
\newcommand{\N}{\mathbb{N}}
\newcommand{\R}{\mathbb{R}}
\newcommand{\C}{\mathbb{C}}
\newcommand{\curl}{\mathrm{curl}}
\newcommand{\del}{\partial}
\newcommand{\Id}{\mathrm{Id}}
\newcommand{\herm}{\mathrm{herm}\,}
\newcommand{\tr}{\mathrm{tr}}
\newcommand{\kerk}{\mathfrak{k}}
\newcommand{\cchermp}[2]{ \frac{1}{2} \del #1 \otimes \bar{\del} #1 + \herm \del #2}
\newcommand{\cchermm}[2]{ -\frac{1}{2} \del #1 \otimes \bar{\del} #1 - \herm \del #2}
\newtheorem{Thm}{Theorem}[section]
\newtheorem{Prop}[Thm]{Proposition}
\newtheorem{Lem}[Thm]{Lemma}
\newtheorem{Cor}[Thm]{Corollary}
\newtheorem{Def}[Thm]{Definition}
\newtheorem{Rem}[Thm]{Remark}
\newtheorem{Ques}[Thm]{Question}
\numberwithin{equation}{section}
\title[Very weak solutions]{Very weak solutions of quadratic Hessian equations}
\date{\today}
\author{S\l awomir Dinew, Szymon Myga}
\keywords{Very weak solutions, regularity, convex integration}
\address{Institute of Mathematics, Jagiellonian University, ul. Lojasiewicza 6, 30-348 Krak\'ow, Poland}
\email{Slawomir.Dinew@im.uj.edu.pl }
\address{Institute of Mathematics, Gda\'nsk University, ul. Wita Stwosza 57, 80-308 Gda\'nsk, Poland}
\email{szymon.myga@ug.edu.pl}
\subjclass[2010]{Primary: , secondary: }
\begin{document}
	\begin{abstract}
We extend the methods of Lewicka - Pakzad, Sz\'ekelyhidi - Cao and Li - Qiu to study the notion of very weak solutions to the complex $\sigma_2$ equation in domains in $\mathbb C^n,\ n\geq 2$. As a by-product we sharpen the regularity threshold of the counterexamples obtained by Li and Qiu in the real case.
	\end{abstract}
	\maketitle
 \section{Introduction} 
 
 In a seminal paper~\cite{LewPak17} Lewicka and Pakzad exploited the methods of convex integration to produce highly oscillating distributional solutions (dubbed very weak solutions) to the Monge-Amp\`ere equation in dimension 2 - see the next section for all the relevant definitions. In particular they were able to construct $C^{1,\alpha}$-regular solutions for $\alpha<\frac17$. Their approach was later improved by Cao and Sz\'ekelyhidi~\cite{CaoSzek19}, who used a somewhat different iteration scheme and were able to construct very weak oscillating solutions with $C^{1,\alpha},\ \alpha<\frac15$ regularity. Their result was based on the work of De Lellis, Inauen and Sz\'ekelyhidi~\cite{DIS18} concering the $C^{1,\alpha}$ immersions of disks into $(\R^3,g)$. Since then the threshold of flexibility has been moved to $\frac13$ by the authors of~\cite{CHI23}.
 
 In all those approaches the quadratic nonlinearity of the Monge-Amp\`ere operator in dimension 2 plays a crucial role. For $n\geq 3$ there seems to be no way to define a very weak Monge-Amp\`ere operator without using any Hessian terms - see \cite{Iwan01}. As a side remark we wish to point out that similar difficulty occurs when studying the domain of definition of the complex Monge-Amp\`ere operator when acting on plurisubharmonic functions - see \cite{Blo04,Blo06}.
 
 The scheme constructed in \cite{LewPak17} was later developed in a few different directions. It was apapted to the Dirichlet problem for \Monge{} equation on domains in $\R^2$ by Cao~\cite{Cao23}. On the other hand Lewicka generalized it along a geometric path to the so-called \Monge{}system, where it arises as the linearization of the curvature of deformation of a flat submanifold of any codimension~\cite{Lew25, Lew25a, Lew25b}. She further improved the regularity results for this system together with Inauen in~\cite{InaLew25, InaLew25a}.
 
 The scheme was also implemented to the more general $\sigma_2$ equations in $\R^n$ by~\cite{LiQiu}. In particular, Li and Qiu were able to produce very weak solutions of regularity $C^{1,\alpha}$ for $\alpha<\frac1{1+n+n^2}$.
 
 In the present note we extend this circle of ideas further and study the very weak solutions to the {\it complex} $2$-Hessian equation. Following the approach of~\cite{CaoSzek19} we obtain the following result:
 
 \begin{Thm}
 	Let $\Omega$ be a bounded simply-connected domain in $\C^n$. For any $ f \in L^p \left( \Omega \right) $ with $ p > n + \frac{1}{2}$ and any $ \beta < \frac{1}{1 + 2n} $ there are  $C^{1,\beta}$ very weak solutions to
 	$$S_2^{\C}(u)=f.$$
 	Furthermore these solutions are dense in $C^0 \left( \overline{\Omega} \right) $.
 \end{Thm}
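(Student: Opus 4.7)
The plan is to adapt the Cao--Sz\'ekelyhidi convex-integration scheme \cite{CaoSzek19}, as extended to real $\sigma_2$ by Li--Qiu \cite{LiQiu}, to the Hermitian setting. Since
$$S_2^{\C}(u) = \tfrac12\bigl((\tr\del\bar\del u)^2 - |\del\bar\del u|^2\bigr)$$
is quadratic in the complex Hessian, repeated integration by parts expresses $S_2^{\C}(u)$, in the sense of distributions, as $\del\bar\del$ applied to scalars that are quadratic in $\del u$ and $\bar\del u$; this is the Hermitian counterpart of the Iwaniec identity. Using the simple connectedness of $\Omega$ to solve an auxiliary $\del\bar\del$-Poincar\'e problem for a primitive $F$ of $f$, the very weak equation becomes equivalent to finding $u,v \in C^{1,\beta}$ and a Hermitian matrix field $A$ with the decoupled system
$$
\cchermp{u}{v} = A, \qquad \sigma_2^{\C}(A) = f \text{ pointwise.}
$$

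The iteration starts from a smooth strict subsolution $(u_0,v_0,A_0)$ with positive-definite Hermitian deficit $\mathcal{E}_0 := A_0 - \cchermp{u_0}{v_0}$, and at stage $k$ one adds an oscillation
$$ u_{k+1} = u_k + \delta_{k+1}^{1/2}\lambda_{k+1}^{-1}\sum_i a_{i,k}(z)\cos\!\bigl(\lambda_{k+1}\mathrm{Re}\,\skalarny{\xi_i}{z}\bigr),$$
together with an explicit update for $v_{k+1}$ designed so that $\cchermp{u_{k+1}}{v_{k+1}}$ reproduces, modulo mean-zero high-frequency error, the new $A_{k+1}$. The algebraic heart of the construction is a Hermitian version of the Cao--Sz\'ekelyhidi geometric lemma: every positive Hermitian matrix in a compact subset of the positive cone can be written as a smooth nonnegative combination of a fixed finite family of rank-one tensors $\xi_i \otimes \bar\xi_i$, $\xi_i \in \C^n$.

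The usual double-exponential schedule $\lambda_{k+1}=\lambda_k^b$, $\delta_k=\lambda_k^{-2\alpha}$ with $b>1$ close to $1$ gives, by interpolation, $\|u_{k+1}-u_k\|_{C^{1,\beta}} \lesssim \delta_{k+1}^{1/2}\lambda_{k+1}^\beta$; balancing this against the requirement that the new deficit have size $\delta_{k+2}$ forces $\beta<\alpha \leq 1/(1+2n)$, where the $2n$ is the real dimension of $\Omega$ and enters through the sharp estimates on the oscillatory stress of $\sigma_2^{\C}$. The hypothesis $p>n+\tfrac12$ is used in real dimension $2n$ to realize $f$ as $\del\bar\del$ of a sufficiently H\"older regular potential. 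Density in $C^0(\overline\Omega)$ follows from $\sum_k \delta_{k+1}^{1/2}\lambda_{k+1}^{-1} < \varepsilon$ for large $\lambda_0$. The main obstacle I expect is not the convergence bookkeeping but the Hermitian geometric lemma itself, together with the verification that the oscillatory corrections produce only mean-zero residuals absorbable into $v_{k+1}$; the genuinely complex rather than symmetric-real structure of $\del\bar\del u$ is precisely what yields the sharper exponent $1/(1+2n)$, as opposed to the $1/(1+2n+4n^2)$ that the Li--Qiu real approach would produce in ambient real dimension $2n$.
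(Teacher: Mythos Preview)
Your general architecture (convex integration with double-exponential frequencies, reduction to a matrix decomposition problem, interpolation to get $C^{1,\beta}$ convergence) is correct, but the mechanism you identify as ``the algebraic heart'' is precisely the one that \emph{fails} to give the exponent $\frac{1}{1+2n}$. A Nash-type geometric lemma writing a positive Hermitian matrix as $\sum a_i\,\xi_i\otimes\bar\xi_i$ is the Li--Qiu device; since Hermitian $n\times n$ matrices have real dimension $n^2$, any such decomposition needs on the order of $n^2$ rank-one directions, and the resulting stage requires that many corrugation substeps. Running the usual bookkeeping this yields an exponent of the form $\frac{1}{1+cn^2}$, not $\frac{1}{1+2n}$.

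What the paper does instead is a diagonalisation trick (its Proposition~\ref{Prop:diagonalizacja}): one exhibits, for any Hermitian field $H$ close to $\Id$, an explicit element $\kerk$ of the weak kernel of $\tr\,\curl_L\overline{\curl}_R$ such that $H+\kerk=\mathrm{diag}(d_1^2,\dots,d_n^2)$. The off-diagonal entries are killed by solving Dirichlet problems $2\Delta\psi^{j\bar k}=-h^{j\bar k}$ and assembling $\kerk$ from the $\psi^{j\bar k}$; this $\kerk$ is neither of the form $\herm\del w$ nor obtainable from a pointwise rank-one decomposition. With the deficit now diagonal, only $n$ corrugations along the coordinate axes $e_1,\dots,e_n$ are needed per stage, and this is exactly what produces $\beta<\frac{1}{1+2n}$. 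Your proposal lacks this kernel element $\kerk$ entirely---you carry only $u$ and $v$---so you cannot diagonalise and must pay for $\sim n^2$ directions.

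Your explanation for the sharp exponent is also off: it has nothing to do with the complex versus real structure of $\del\bar\del u$. The paper proves the identical diagonalisation lemma in the real case (Proposition~\ref{Prop:diagonalizacja-R}) and thereby improves Li--Qiu's real threshold from $\frac{1}{1+n+n^2}$ to $\frac{1}{1+2n}$ as well. Finally, the reduction step is not ``$\sigma_2^{\C}(A)=f$ pointwise'' but the linear equation $-\tfrac12\tr\,\curl\overline{\curl}\,A=f$, solved by $A=(u+\tau)\Id$ with $(1-n)\Delta u=f$; the condition $p>n+\tfrac12$ enters through the H\"older regularity of this $u$ needed to close the deficit estimate.
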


Of course the very weak solutions above are in general very far from being $2$-subharmonic. As in more traditional approaches - see \cite{Blo96}, \cite{DinKo13}, \cite{Lu12} $2$-subharmonicity is assumed so that the equation is elliptic it is natural to ask under what additional conditions some {\it rigidity} occurs i.e. all very weak solutions are $2$-subharmonic and thus coincide with the ordinary weak solutions to the equation - see \cite{Blo96} for the theory associated to that latter class of functions.

Not surprisingly our approach is very close to the one in \cite{CaoSzek19, DIS18}. The essential difference is the diagonalisation result proven in Proposition \autoref{Prop:diagonalizacja} which differs from its counterpart (Proposition 3.1 in \cite{CaoSzek19}) - see Section 5 for more discussion on the topic. As a by-product we obtain similar diagonalisation for real $2$-Hessian equations (Proposition \autoref{Prop:diagonalizacja-R}) and hence we can improve the regularity exponent threshold for oscillating very weak solutions obtained by Li and Qiu in \cite{LiQiu} from $\alpha<\frac1{1+n+n^2}$ to $\alpha<\frac1{1+2n}$. Note that for $n=2$ this recovers the $\frac15$ threshold obtained by Cao and Szekelyhidi. Whether anything similar can be done for $S_k^{\C}$ or $S_k^{\R}$ operators for $n\geq k\geq3$ remains a very interesting open problem.

Apart for its own interest there are several reasons for our generalization.
 
 Arguably the most important one is to introduce this circle of ideas to specialists dealing with complex PDEs. In fact convex integration is not really new for complex analysts. Indeed, similar constructions were exploited to construct nontrivial {\it inner} functions - \cite{HakSib82}, \cite{Lo82}. To the best of our knowledge though, the convex integration was never used for studying complex PDEs of Hessian type. Hence we hope that our note will be a starting point for applying convex integration in such equations. 
 
 As a concrete example we have in mind we recall two important results due to Sver\'ak - \cite{Sve}, later generalized and improved by Lewicka, Mahadevan and Pakzad in~\cite{LewMahPak}:
 \begin{Thm}
	 Let $U$ be a domain in $\mathbb R^2$ and $c>0$ be a constant. Assume that $u\in W^{2,2}(U)$ solves the equation $det(D^2u)=f$ for some $f\geq c>0$ a.e. Then $u$ is $C^1$ and convex up to a sign change.
 \end{Thm}
\begin{Lem} Let $B_r(x)$ denote the ball centered at $x$ with radius $r$ in $\mathbb R^n$. Then for any positive definite matrix $A\in \mathbb R^{n\times n}_{sym}$ and a constant $c>0$  there is a $\varepsilon>0$ dependent only on $A$, $n$ and $c$ such that the following holds: if $u\in W^{2,n}(B_1(0))$ solves $det(D^2u)=f$ a.e. for some $f\geq c>0$ and 
	$$\int_{B_1(0)}||A-D^2u||^ndV\leq \varepsilon$$
	then $u$ is convex in the ball $B_{\frac12}(0)$. 
	
\end{Lem}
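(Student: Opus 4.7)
The plan is to argue by contradiction and compactness, reducing the statement to the preceding rigidity theorem of Sver\'ak.

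Suppose the conclusion fails. Then there exist $\varepsilon_k \searrow 0$ and $u_k \in W^{2,n}(B_1(0))$ with $\det(D^2 u_k) = f_k \geq c$ a.e.\ and $\int_{B_1(0)} \|A - D^2 u_k\|^n \, dV \leq \varepsilon_k$, yet each $u_k$ fails to be convex in $B_{1/2}(0)$. Subtract an affine function to normalise $u_k(0)=0$ and $\int_{B_1(0)} \nabla u_k \, dV = 0$; Poincar\'e's inequality together with the Sobolev embedding $W^{2,n}(B_1(0)) \hookrightarrow C^0(\overline{B_1(0)})$ yields, along a subsequence, $u_k \to u_\infty$ uniformly and $D^2 u_k \to A$ strongly in $L^n$, so that $u_\infty(x) = \tfrac12 \langle Ax, x\rangle$.

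Next, let $\lambda := \lambda_{\min}(A) > 0$. Chebyshev applied to $\|A - D^2 u_k\|^n$ shows that the set $E_k := \{ x \in B_1(0) : \|A - D^2 u_k(x)\| < \lambda/2 \}$ satisfies $|B_1(0) \setminus E_k| \leq (2/\lambda)^n \varepsilon_k$. On $E_k$ the Hessian $D^2 u_k$ is pointwise positive definite, hence $\{ D^2 u_k > 0 \}$ has positive measure for $k$ large. Invoking the preceding theorem, $u_k$ must be convex on $B_1(0)$ up to a sign change. The sign-change case is impossible: if $-u_k$ were convex then $D^2 u_k \leq 0$ a.e., contradicting strict positive definiteness on $E_k$. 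Therefore $u_k$ is convex in $B_1(0)$ and a fortiori in $B_{1/2}(0)$, contradicting the standing assumption.

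The principal technical obstacle is the regularity gap between the first theorem, stated in $W^{2,\infty}$, and the present lemma, which works in $W^{2,n}$. One can either verify that Sver\'ak's proof, which ultimately rests on pointwise a.e.\ information for $D^2 u$ together with distributional identities, extends directly to $W^{2,n}$; or one can regularise each $u_k$ through mollification combined with auxiliary Dirichlet problems for \Monge operators with strictly positive right-hand side, apply the theorem to the smoother approximants, and pass to the limit using the $L^n$ and $C^0$ convergence already in hand. The first route appears cleaner in practice.
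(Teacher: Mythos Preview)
This lemma is not proved in the paper at all; it is quoted from Sver\'ak's work as background, and the paper explicitly remarks that its proof ``heavily relies on degree theory'' and that it is ``a central lemma for the proof of Sver\'ak's convexity theorem above.'' In other words, the logical dependence in the original source runs the opposite way: the lemma is an ingredient in the proof of the $W^{2,\infty}$ theorem, not a consequence of it.

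Your argument is therefore circular. You invoke the preceding $W^{2,\infty}$ rigidity theorem to force convexity of each $u_k$, but that theorem is itself built on the lemma you are trying to establish. The regularity gap you flag is not a side issue to be patched later: mollifying $u_k$ destroys the pointwise constraint $\det(D^2u_k)=f_k\geq c$, and ``verifying that Sver\'ak's proof extends to $W^{2,n}$'' amounts to re-proving the present lemma. Note also that your compactness framework is doing no work: the limit $u_\infty=\tfrac12\langle Ax,x\rangle$ is never used, since the Chebyshev step together with the appeal to the theorem already applies to any single $u$ with $\int\|A-D^2u\|^n$ small, no sequence needed. The genuine argument (in Sver\'ak's paper) is a local degree-theoretic analysis of the gradient map, which is also why the conclusion is only asserted on the interior ball $B_{1/2}(0)$ rather than on all of $B_1(0)$.
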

The lemma above can be thought of as an $\varepsilon$- regularity result providing rigidity (in this case - convexity) to a solution suitably close to a convex quadratic function. This is a central lemma for the proof of Sver\'ak's convexity theorem above. As the proof of the lemma heavily relies on degree theory it seems hard to prove analogous results for other operators. On the other hand Lemma 6.10 in ~\cite{LewMahPak} provides certain regularity of $u$ around the center of such a ball in real dimension $2$. Hence it is interesting to understand the full strength of the above results.  As an example Faraco and Zhong - \cite{FarZho03} have asked if a $W^{2,\infty}$ function has to be $k$-convex if $\sigma_k(D^2u)\geq\varepsilon>0$ and we know that the set of points where the Hessian matrix is $k$-admissible is of positive measure. In a different direction the following question seems very natural:

\begin{Ques}Let $B_1(0)$ be the unit ball in $\mathbb C^n$. Let $u\in W^{2,p}, p\geq n$ solves 
	$$det\left( \frac{\partial^2 u}{\partial z_j\partial\bar{z}_k} \right) =f\geq c>0.$$ 
	Fix a positive definite Hermitian matrix $A\in \mathbb C^{n\times n}_{herm}$ and assume that
	$$\int_{B_1(0)} \left\| A- \left( \frac{\partial^2 u}{\partial z_j\partial\bar{z}_k} \right) \right\|^pdV\leq \varepsilon.$$
	Is there a $p$ and $\varepsilon_0$ dependent on $p$, $n$ and $A$ so that any $\varepsilon\leq\varepsilon_0$ implies that
	$u$ is plurisubharmonic in $B_{\frac12}(0)$ in the above scenario?
	\end{Ques} 
Note that while the functions constructed by Li and Qiu in \cite{LiQiu} are quite far in terms of regularity to the reasonable thresholds in the above questions they obviously show the importance of {\it some} regularity assumptions.

Yet another reason for studying very weak solutions stems from attempts to apply implicit function theorem type arguments. In the case of Hessian type equations the admissiblity requirement is a well-known obstruction for applying such arguments when working over Sobolev spaces. Typically this is circumvented through working in H\"older spaces instead (of sufficiently high regularity) but such an approach has its limitations. Hence it seems that constructing a theory of solutions that generalizes past the admissibility requirement is an important step for building such a theory.

Lastly the theory of $L^p$-viscosity solutions to complex Hessian equations was recently initiated - see \cite{ADO24}. As noted in \cite{ADO24} there are still numerous problems with establishing a good notion of a {\it supersolution} in that setting - once again these are caused by the admissibility requirement. Our hope is that very weak solutions of suitable Sobolev regularity can serve as test functions within such a theory.

Our note is organized as follows: in Section 2 we collect all the necessary definitions and conventions. Section 3 contains the description of the machinery of very weak solutions together with a dictionary linking these tools with the standard notation for complex Hessian equations. In Section 4 we provide the essential toolkit needed to apply convex integration. Section 5 deals with the crucial diagonalisation result on which the whole argument hinges. In Section 6 we repeat the arguments from \cite{CaoSzek19} and~\cite{LiQiu} in order to obtain oscillating very weak solutions through an iteration procedure. Finally in Section 7 we briefly explain how to apply analogous reasoning in the real case.

{\bf Acknowledgements}

 The first named author was partially supported by grant
no. 2021/41/B/ST1/01632 from the National Science Center, Poland. Part of the work was done while the second named author was visiting Jagiellonian Univeristy and he wishes to express his gratitude for the hospitality. Both authors wish to thank the anonymous referee for his/her remarks.

\section{Preliminaries}
Below we define all prerequisites for the remaining part of the note. We begin with settling the notation:

{\bf Notation.} We follow the notation form \cite{GiTr}.

 By $|\cdot|_k$ we define the semi-norm
\[
|u|_k := \max_{|\alpha|= k } \sup_\Omega \left| \frac{\del^\alpha u}{\del  x_1^{\alpha_1} \ldots \del x_k^{\alpha_k}}(x)\right|.
\]
For $k = 0$ this is simply a supremum of $|u|$ and is in fact a norm. By $\| \cdot \|_k$ we define the norm
\[
\| u \|_k := \sum_{j = 0}^k |u|_j.
\]
H\"{o}lder norms and semi-norms will be denoted by Greek letters and are defined as
\[
| u |_\alpha := \sup_{x \neq y} \frac{| u(x) - u(y) | }{|x - y|^\alpha} 
\]
and
\[
\| u \|_{k;\alpha} := \| u \|_k + \sum_{|\alpha| = k} \left| \frac{\del^\alpha u}{\del  x_1^{\alpha_1} \ldots \del x_k^{\alpha_k}} \right|_\alpha.
\]

$\R^{n\times n}_{sym}$, will be the set of all symmetric real $n\times n$ matrices, while $\R^{n\times n}_{sym}(U)$ will be the set of matrix valued functions defined on a domain $U$ and with values in $\R^{n\times n}_{sym}$. Analogously  $\C^{n\times n}_{herm}$ and $\C^{n\times n}_{herm}(\Omega)$ denote the Hermitian symmetric matrices and Hermitian symmetric valued functions, respectively.

As customary by $C$ we shall denote a generic constant that depends only on the pertinent quantities. In particular it may change line to line. If confusion may arise additional dependencies will be hinted. 

The operators $\wedge$ and $\otimes$ denote the standard exterior and tensor product. By $*$ we shall denote the Hodge operator both in the real and complex setting (in the complex setting we use the convention $\int\alpha\wedge\overline{*\beta}=\int<\alpha,\beta>$ for forms of the same bidegree). It will be clear from the context which one will be used.

The $\curl$ operator acts on a real $2$-vector field $v=(v^1,v^2)$ on (a subset of) $\R^2$ by 
\[\curl (v):= \frac{\partial v^1}{\partial y}(x,y)-\frac{\partial v^2}{\partial x}(x,y).\]

The definition extends from vector fields to tensors in a natural way. More general $\curl$-type operators will be introduced later on.

We shall use the standard complex notation whe dealing with complex PDEs. In particular for $z=(z_1,\cdots,z_n)$ we have $z_j = x_j + iy_j, j=1,\cdots, n$ and we put
\[\mathfrak{R}(z_j)=x_j,\ \mathfrak{I}(z_j)=y_j;\]
\[
\del_{z_j} = \frac12(\del_{x_j} - i\del_{y_j}), \quad \del_{\bar{z}_j} = \bar{\del}_{z_j} =\frac12(\del_{x_j} + i \del_{y_j}).
\]
Similarly we define the differential one forms
\[
dz_k = dx_k + idy_k, \quad d\bar{z}_k = dx_k - idy_k.
\]
and build from these the spaces of $(p,q)$ forms $0\leq p\leq n,\ 0\leq q\leq n$ in the usual way.

Finally for a $n\times n$ matrix $M$ its norm $||M||$ will be defined by
\[||M|| := \sup_{x\in S^{n-1}}||Mx||.\]

 \section{Definining the very weak solutions for quadratic Hessians}

Let $\Omega$ be a simply-connected domain in $\R^2$. In~\cite{Iwan01} Iwaniec defined the very weak version of \Monge operator on $W_{loc}^{1,2}(\Omega)$ as
\[
MA(v) = \del^2_{xy}(\del_x v \del_y v) - \frac{1}{2}\del^2_{yy} (\del_x v)^2 - \frac{1}{2} \del^2_{xx} (\del_y v)^2,
\]
which can be interpreted as an operator on matrix fields
\[
MA(v) = -\frac{1}{2} \curl \curl (\nabla v \otimes \nabla v)
\]
and the first curl can be taken either row-wise or column-wise since the matrix is symmetric. The definition can be extended in two directions, first we may define the 2-Hessian operator in $\R^n$ and then we can extend it to the 2-Hessian in $\C^n$. Let us recall that the 2-Hessian operator usually denoted by $S_2(u)$ is defined as 
\begin{equation}\label{eq:2-Hess-klas}
	S_2(u) = \sigma_2 \left( D^2u \right) = \sum_{i < j} \del^2_{ii} u \, \del^2_{jj} u - (\del^2_{ij} u)^2.
\end{equation}
Here $\sigma_2 \left( D^2 u \right) $ stands for second elementary symmetric polynomial of the eigenvalues of $D^2 u$.

Before we define the 2-Hessian in $\R^n$ we must take a closer look at what it would mean to define a row-wise or column-wise curl of a matrix field. For this problem we shall use the language of differental forms. A matrix field can be looked at as a section of a tensor bundle $\left( \bigwedge^1 \R^n \otimes \bigwedge^1 \R^n \right) (\Omega)$. In that bundle we define a left-curl as
\[
\curl_L (f\, dx \otimes dy) := *d(fdx) \otimes dy.
\]
Similarly we define the right-curl ($\curl_R$). Therefore the result of $\curl_R \curl_L M$ for some matrix field $M$ will be a tensor field in $\left( \bigwedge^{n-2} \R^n \otimes \bigwedge^{n-2} \R^n \right) (\Omega)$. Let us notice that the definition is consistent with usual curls in 2 and 3 dimensions. Finally, as the outcome should be  a scalar function we need to introduce an appropriate trace operator, which in the language of differential forms comes naturally. Let $ dx_I, dx_J$ be some elements of the canonical basis of $\bigwedge^{k} \R^n$ then we define
\[
\mathrm{tr} \left( f dx_I \otimes dx_J \right) := * \left( f dx_I \wedge *dx_J \right)
\]
Let us notice that for the matrix fields the result of this operation will be indeed their trace. Now we are able to introduce the very weak form of the 2-Hessian operator:

\begin{Def}
	Let $\Omega$ be a simply-connected domain in $\R^n$ and let $u \in W^{1,2}_{loc}(\Omega)$. We say that $u$ is a very weak solution to the 2-Hessian equation
	\[
	S_2(u) = f
	\]
	if $u$ solves the equation
	\[
	-\frac{1}{2} \tr \left( \curl_L\curl_R \left( d u \otimes d u \right) \right) = f
	\]
	in the weak sense.
	
\end{Def}

\begin{Rem}
	The reader can check that if instead of integrating by parts we differentiate the coefficients of the tensor field we will indeed end up with the operator~\eqref{eq:2-Hess-klas}.
\end{Rem}

\paragraph{ \bf Comparison with $\mathfrak{C}^2$.} At this point it is worthwhile to compare the operators $\curl\,\curl$ and $\mathfrak{C}^2$. The operator $\mathfrak{C}^2$ was introduced by Lewicka in~\cite{Lew25}. For a regular enough matrix field $A \in \mathbb{R}^{n \times n}_{sym}(\Omega)$ the operator $\mathfrak{C}^2(A): \Omega \to \R^{n^4}$ is defined as
\[
\mathfrak{C}^2(A)_{ij,st} = \del_i \del_s A_{jt} + \del_j \del_t A_{is} - \del_i \del_t A_{js} - \del_j \del_s A_{it}.
\]

Let us reinterpret $\mathfrak{C}^2$ in the language of differential forms. We may think of $A$ as a section of $S^2 T^*\Omega$ with canonical basis denoted by $dx_j \odot dx_t$. Then if we compute the second derivative of $A$, we end up with a section of $S^2(T^*\Omega \otimes T^*\Omega)$ with coefficients
\[
\del_i \del_s A_{jt} \, (dx_i \otimes dx_j) \odot (dx_s \otimes dx_t).
\]
If we would like to make a differential form out of it we must alternate coefficients, thus we get
\[
\mathrm{Alt}_{1,2} \mathrm{Alt}_{3,4} \nabla^2 A \in \left( \bigwedge\nolimits^2 \Omega \right) \otimes \left( \bigwedge\nolimits^2 \Omega \right).
\]
Looking at the coefficients of a resulting object we get
\[
(\mathrm{Alt}_{1,2} \mathrm{Alt}_{3,4} \nabla^2 A)_{ij, st} = \del_i \del_s A_{jt} + \del_j \del_t A_{is} - \del_i \del_t A_{js} - \del_j \del_s A_{it}
\]
for $i < j$ and $s < t$.

On the other hand applying $d_L$ and $d_R$ defined as
\[
 d_L( f \, dx \otimes dy ) := d(f dx) \otimes dy, \qquad d_R( f \, dx \otimes dy ) := dx \otimes d( f dy) 
\]
to $A = A_{jt} dx_j \otimes dx_t$ we get
\[
(\mathrm{Alt}_{1,2} \mathrm{Alt}_{3,4} \nabla^2 A) = d_L d_R (A).
\]
It is now easy to see that $\mathfrak{C}^2$ is basically identical to $\curl_L \curl_R$ up to an antisymmetrization and the Hodge star, which implies that
\[
\ker \left( \mathfrak{C}^2 \right) \subseteq \ker \left( \tr \, \curl\curl \right).
\]

Since $\mathfrak{C}^2$ is the linearization of Riemannian curvature of shallow manifolds, our operator $\tr \,\curl\curl$ has a natural interpretation as the linearization of the \emph{scalar} curvature of shallow manifolds. It is thus natural to ask for a similar result in the case of linearized \emph{Ricci} curvature.

\paragraph{ \bf Complex case.} The complex 2-Hessian operator is defined by
\[
S_2^\C(u) = \sigma_2 \left( \left[ \frac{\del^2 u}{\del z_k \del\bar{z}_j} \right]_{k,j} \right) = \sum_{k < j} \del^2_{z_k \bar{z}_k} u \, \del^2_{z_j \bar{z}_j} u - \del^2_{z_k \bar{z}_j} u \, \del^2_{z_j \bar{z}_k} u.
\]
To finally introduce the very weak formulation we recall two more standard operators in the complex variables, namely the $(1,0)$ and $(0,1)$ parts of the exterior derivative $d$:
\[
\del u  = \sum_k \del_{z_k}u\, dz_k, \quad \bar{\del} u = \sum_j \del_{\bar{z}_j}u \, d\bar{z}_j
\]

\begin{Def}
	For  tensor fields over $\C$ we define
	$\curl_L$ as
	\[
	\curl_L (f dz \otimes d\bar{w}) := \left( *\del ( fdz ) \right) \otimes d\bar{w}
	\]
	and $\overline{\curl}_R$ as
	\[
	\overline{\curl}_R (f dz \otimes d\bar{w}) := dz \otimes \left( * \bar{\del} ( f d\bar{w} ) \right).
	\]
\end{Def}
\begin{Def}
	Let $dz$ and $dw$ be elements of the canonical basis of $\bigwedge^{k,p} \C^n$ and $\bigwedge^{p,k} \C^n$ respectively then we define the trace as 
	\[
	\tr \left( f dz \otimes dw \right) := * \left( f dz \wedge \overline{ * dw} \, \right).
	\]
\end{Def}

We may note again that after identifying complex matrices with elements of $\bigwedge^{1,0} \C^n \otimes \bigwedge^{0,1} \C^n$ the above definition reproduces the usual trace operator.

\begin{Def}
	Let $\Omega$ be a simply connected domain in $\C^n$. We say that $u$ is a very weak solution to the 2-Hessian equation
	\begin{equation}\label{eq:cx-2-Hessian}
		S^\C_2(u) = f
	\end{equation}
	if $u$ satisfies
	\[
	-\frac{1}{2}\tr \left( \curl_L\overline{\curl}_R \right) \left( \del u \otimes \bar{\del} u \right) = f.
	\]
	in the weak sense.
\end{Def}

In the complex case the 2-Hessian has a much simpler definition of very weak solutions. Let us introduce the operator
\[
d^c = \frac{i}{2}(\bar{\del} - \del).
\]
We say that $u$ is the very weak solution of the 2-Hessian equation~\eqref{eq:cx-2-Hessian} if it satisfies
\[
dd^c \left( du \wedge d^c u \right) \wedge \left( dd^c \frac{|z|^2}{2} \right)^{n-2} = f
\]
in the weak sense.

{{\bf The kernel of} $\curl\overline{\curl}$} 

Let us consider the kernel of the $\mathrm{curl}\overline{\mathrm{curl}}$  operator in the space $\C^{n \times n}_\herm \left( \Omega \right)$. As in the case of $n = 2$ it will contain matrices of the form $\herm \del w$ for any regular enough $w :\C^n \rightarrow \C^n$. Recall that
\[
\herm \del w := \frac{1}{2} \left( \del w + \left( \del w \right)^{*} \right),
\]
where $^*$ stands for the Hermitian conjugate and $\del w$ is the matrix $\left[ \del w_i / \del z_j \right]_{1 \leq i,j \leq n}$. In order to investigate the kernel when $n > 2$ we need the following lemma which follows directly from computation:
\begin{Lem}
	Let $i,j\in\lbrace1,\cdots,n\rbrace$ and $a_{i\bar{j}}$ be  smooth complex valued functions. Then
	\begin{equation}\label{aij}
		\tr \, \curl_L \overline{\curl}_R \left( a_{i\bar{j}} dz_i \otimes d\bar{z}_j \right) =\begin{cases}
			-\del_{j\bar{i}} a_{i \bar{j}} \ {\rm if}\ i\neq j;\\
			 \sum_{j \neq i} \del^2_{j\bar{j}} a_{i \bar{i}}=: \Delta_i a_{i \bar{i}}\ {\rm if}\ i=j.
		\end{cases} 
	\end{equation}
\end{Lem}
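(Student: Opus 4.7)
The plan is to establish the identity by direct computation, applying the definitions of $\overline{\curl}_R$, $\curl_L$ and $\tr$ one at a time to the single tensor $a_{i\bar j}\, dz_i \otimes d\bar z_j$ (no summation on $i,j$) and tracking which basis contributions survive at the end.

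First I apply $\overline{\curl}_R$, which by definition equals $dz_i \otimes *\bar\partial(a_{i\bar j}\, d\bar z_j)$, and since $\bar\partial(a_{i\bar j}\, d\bar z_j) = \sum_{k} \partial_{\bar z_k} a_{i\bar j}\, d\bar z_k \wedge d\bar z_j$ only the terms with $k \neq j$ survive. Next I apply $\curl_L$ to the left factor via $\partial(a\, dz_i) = \sum_{l} \partial_{z_l} a\, dz_l \wedge dz_i$, again with only $l \neq i$ contributing. After both Hodge stars the resulting tensor lies in $\bigwedge^{n,n-2} \otimes \bigwedge^{n-2,n}$ and takes the form of a sum over pairs $(l,k)$ with $l \neq i$ and $k \neq j$, weighted by the second mixed derivatives $\partial^2_{z_l \bar z_k} a_{i\bar j}$ and by starred basis $2$-forms.

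Finally, $\tr$ is computed from $\tr(f\, dz_I \otimes dw_J) = *(f\, dz_I \wedge \overline{*\, dw_J})$. The key observation is that the starred basis $(n,n-2)$- and $(n-2,n)$-forms pair into a nonzero $(n,n)$-form only when the index sets match: for the present tensor the surviving contributions require $\{l,i\} = \{k,j\}$. If $i \neq j$ the only possibility is $l = j$ and $k = i$, giving a single term $-\partial^2_{z_j \bar z_i} a_{i\bar j}$. If $i = j$ the pairing only demands $l = k$ with $l \neq i$, and summation over the admissible $l$ produces $\sum_{l \neq i} \partial^2_{z_l \bar z_l} a_{i\bar i}$, which is exactly the partial Laplacian $\Delta_i a_{i\bar i}$ in the statement.

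The main obstacle is the sign and normalization bookkeeping for the iterated Hodge star: $*$ changes bidegree $(p,q) \to (n-q,n-p)$ and squares to a signed identity depending on dimension and bidegree, while the trace definition inserts yet another star. The cleanest way to settle the signs is to first verify the base case $n=2$ --- where the formulas collapse to an elementary identity --- and then to cross-check the general case by substituting $a_{i\bar j} = u_i \bar u_j$ for a smooth $u$, summing over $i,j$, and multiplying by $-\tfrac{1}{2}$; the result must agree with the classical expression for $S_2^{\C}(u)$, which fixes every constant in the calculation.
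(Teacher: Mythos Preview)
Your proposal is correct and matches the paper's approach: the paper simply states that the identity ``follows directly from computation'' and gives no further details, so your unwinding of $\overline{\curl}_R$, then $\curl_L$, then $\tr$, together with the index-matching observation $\{l,i\}=\{k,j\}$, is exactly the intended argument. Your suggested cross-checks (the $n=2$ base case and the substitution $a_{i\bar j}=\partial_{z_i}u\,\partial_{\bar z_j}u$ compared against $S_2^{\C}(u)$) are a sound way to pin down the signs that the paper leaves implicit.
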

As a corollary we obtain the following description of the kernel:
\begin{Cor}
	The matrix valued function $(a_{i\bar{j}})\in \C^{n \times n}_\herm \left( \Omega \right)$ satisfies \newline
	$\tr \, \curl_L \overline{\curl}_R \left( a_{i\bar{j}} dz_i \otimes d\bar{z}_j \right)=0$
	if and only if
	\begin{equation}\label{eq:2-Hess-kernel}
		\sum_j \Delta_j a_{j\bar{j}} = 2 \sum_{i < j} \mathfrak{R} \left( \del_{i\bar{j}} a_{j\bar{i}} \right).
	\end{equation}
\end{Cor}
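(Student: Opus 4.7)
The plan is to derive the corollary directly from the preceding lemma by linearity and Hermitian symmetry. First, I would write the full tensor as the double sum $\sum_{i,j} a_{i\bar{j}}\, dz_i \otimes d\bar{z}_j$ and use linearity of $\tr\,\curl_L\overline{\curl}_R$ to reduce the vanishing condition to
\[
\sum_{i} \Delta_i a_{i\bar{i}} \;-\; \sum_{i\neq j} \del_{j\bar{i}} a_{i\bar{j}} \;=\; 0,
\]
the two pieces coming from the two cases of formula \eqref{aij}.

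Next, I would convert the off-diagonal sum into a sum over $i<j$ only, by pairing the index pairs $(i,j)$ and $(j,i)$. The Hermitian symmetry $a_{i\bar{j}} = \overline{a_{j\bar{i}}}$ together with the elementary identity $\overline{\del_{i\bar{j}} g} = \del_{j\bar{i}} \bar g$ (valid for any smooth $g$, since $\overline{\del_{z_i}} = \del_{\bar z_i}$) gives
\[
\del_{j\bar{i}} a_{i\bar{j}} \;=\; \del_{j\bar{i}}\,\overline{a_{j\bar{i}}} \;=\; \overline{\del_{i\bar{j}} a_{j\bar{i}}}.
\]
Consequently each pair contributes $\del_{j\bar{i}} a_{i\bar{j}} + \del_{i\bar{j}} a_{j\bar{i}} = 2\,\mathfrak{R}\bigl(\del_{i\bar{j}} a_{j\bar{i}}\bigr)$, so the off-diagonal sum equals $2\sum_{i<j}\mathfrak{R}(\del_{i\bar{j}} a_{j\bar{i}})$, which rearranges to the stated equation \eqref{eq:2-Hess-kernel}. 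The converse direction is identical since every step is an equivalence.

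There is no real obstacle here: the lemma has already done the computational work, and the only point requiring a little care is tracking conjugates correctly when folding the sum from $i\neq j$ to $i<j$. I would just make sure to state the identity $\overline{\del_{i\bar{j}} g}=\del_{j\bar{i}}\bar g$ explicitly so the pairing step is transparent to the reader.
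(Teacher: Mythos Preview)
Your proposal is correct and is exactly the natural derivation the paper has in mind: the Corollary is stated immediately after the Lemma with no separate proof, and your argument—summing \eqref{aij} by linearity and then folding the off-diagonal terms using Hermitian symmetry and $\overline{\del_{i\bar j}g}=\del_{j\bar i}\bar g$—is precisely how one extracts \eqref{eq:2-Hess-kernel} from that Lemma. There is nothing to add.
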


{\bf Results.}

We are now ready to formulate the main results in the note. Let $\Omega$ be a bounded simply-connected domain in $\C^n$. Given the $S_2^\C \left( v \right)$ operator defined above we are looking for a very weak solution to the problem
\begin{equation} \label{eq:MA-vw}
	S^\C_2(v) = f. \tag{$\star$}
\end{equation}

In that direction we prove that the very weak solutions to~\eqref{eq:MA-vw} abound provided they are in class $C^{1,\beta}$ for small enough $\beta$.

\begin{Thm}\label{Thm:main}
	Let $\Omega$ be a bounded simply-connected domain in $\C^n$. For any $ f \in L^p \left( \Omega \right) $ with $ p > n + \frac{1}{2}$ and any $ \beta < \frac{1}{1 + 2n} $ the $C^{1,\beta}$ very weak solutions to~\eqref{eq:MA-vw} are dense in $C^0 \left( \overline{\Omega} \right) $.
\end{Thm}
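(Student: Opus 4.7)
The plan is to implement a Nash--Kuiper style convex integration, closely following Cao--Sz\'ekelyhidi \cite{CaoSzek19}, with the rank-one decomposition in the Hermitian setting supplied by Proposition \autoref{Prop:diagonalizacja}. A direct inspection of \eqref{eq:2-Hess-kernel} shows that every field of the form $\herm\del w$ lies in the kernel of $\tr\,\curl_L\overline{\curl}_R$. Consequently, whenever a pair $(v,w)$ satisfies
\[
\cchermp{v}{w}\;=\;A^{\star},
\]
where $A^{\star}$ is a fixed Hermitian field with $-\tr\,\curl_L\overline{\curl}_R\, A^{\star}=f$, the first component $v$ is automatically a very weak solution of \eqref{eq:MA-vw}; the auxiliary field $w$ plays the role of a ``stress'' absorbing the quadratic errors produced by oscillatory corrections to $v$.

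Given $u_0\in C^0(\overline\Omega)$ to approximate, I begin with a smooth triple $(v_0,w_0,A^{\star})$ for which $v_0$ is uniformly close to $u_0$ and the defect $D_0:=A^{\star}-\cchermp{v_0}{w_0}$ is uniformly positive-definite Hermitian of controllable size $\delta_0$. At stage $q$ one mollifies, localises via a partition of unity, and in each chart uses Proposition \autoref{Prop:diagonalizacja} to decompose
\[
D_q\;=\;\sum_{i=1}^{2n} a_i^{(q)}\,\xi_i\otimes\bar\xi_i
\]
with bounded coefficient functions $a_i^{(q)}$ and a fixed finite family of directions $\xi_i\in\C^n$. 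Each rank-one piece is then removed by an oscillatory step
\[
v_q\;\mapsto\; v_q+\frac{\sqrt{a_i^{(q)}}}{\lambda_q}\,\Gamma\!\bigl(\lambda_q\,\mathfrak{R}(\xi_i\cdot z)\bigr),
\]
for a suitable $1$-periodic profile $\Gamma$, accompanied by a compensating update of $w_q$ that routes the off-diagonal by-products back into $\herm\del w$. After the $2n$ steps composing one stage the new defect has size $\delta_{q+1}\ll\delta_q$, while $|v_{q+1}-v_q|_1\lesssim \delta_q^{1/2}$ and $|v_{q+1}-v_q|_2\lesssim \delta_q^{1/2}\lambda_q$.

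With the classical schedule $\lambda_q=\lambda_0^{b^q}$, $\delta_q=\lambda_0^{-2\beta b^q}$ used in \cite{CaoSzek19,LiQiu}, the telescopic series for $v_q$ converges in $C^{1,\beta}$ precisely when $\beta(1+2n)<1$, the denominator $1+2n$ counting the $2n$ oscillatory steps per stage together with the unit arising from the mollification. Since $\|v_{q+1}-v_q\|_0$ is summable, the uniform distance from $u_0$ can be made arbitrarily small by adjusting $v_0$ and the initial frequency $\lambda_0$, yielding density in $C^0(\overline\Omega)$.

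The decisive obstacle is Proposition \autoref{Prop:diagonalizacja} itself: in the complex setting the rank-one building blocks are constrained to have the Hermitian form $\xi\otimes\bar\xi$, and establishing that only $2n$ such pieces suffice (rather than the cruder $n^2$-type decomposition underlying the Li--Qiu bound $\frac{1}{1+n+n^2}$) is what upgrades the regularity threshold to $\frac{1}{1+2n}$. A secondary technical issue is the integrability hypothesis $p>n+\tfrac12$ on $f$: it arises when one produces the target $A^{\star}$ from $f$ by inverting the linearised operator with Calder\'on--Zygmund estimates and then controls its H\"older regularity via Sobolev embedding, so that $A^{\star}$ can be carried through the iteration alongside the $C^{1,\beta}$ bounds on $v_q$.
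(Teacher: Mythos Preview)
Your overall architecture---convex integration with a stage proposition in the style of \cite{CaoSzek19}---is correct, but your description of Proposition~\autoref{Prop:diagonalizacja} and of the counting that produces the exponent $\frac{1}{1+2n}$ is substantively off, and the scheme as you have written it would not close.

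The proposition does \emph{not} furnish a rank-one decomposition $D_q=\sum_{i=1}^{2n} a_i^{(q)}\,\xi_i\otimes\bar\xi_i$ into $2n$ pieces with fixed directions. What it actually says is that, given a Hermitian field $H$ close to $\Id$, one can construct an element $\kerk$ of the weak kernel $\ker_w(\tr\,\curl_L\overline{\curl}_R)$ such that $H+\kerk=\mathrm{diag}(d_1^2,\ldots,d_n^2)$ is \emph{diagonal}. The kernel piece $\kerk$ is absorbed for free into the equation, and only the $n$ diagonal entries remain to be cancelled by oscillatory corrections, using the standard basis directions $e_1,\ldots,e_n$. This ``diagonalisation modulo kernel'' is precisely the mechanism that improves Li--Qiu's $\frac{1}{1+n+n^2}$ to $\frac{1}{1+2n}$; a clever choice of $2n$ fixed rank-one directions is not what is happening. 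Correspondingly, your iteration must carry a \emph{triple} $(v_q,w_q,\kerk_q)$, not just a pair $(v_q,w_q)$: at each stage one sets $\kerk_{q+1}:=\delta_{q+1}\hat{\kerk}$ with $\hat{\kerk}$ coming from the proposition, and the target equation is $A=\tfrac12\del v\otimes\bar\del v+\herm\del w+\kerk$. Omitting $\kerk$ leaves no way to remove the off-diagonal part of the deficit without paying for it in extra oscillation steps.

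As for the arithmetic, the denominator $1+2n$ does \emph{not} come from ``$2n$ steps plus one for mollification''. There are $n$ oscillation steps per stage; the factor $2$ enters through the mollification scale, which is tied to the \emph{quadratic} commutator estimate $\|(\del v_q\otimes\bar\del v_q)*\phi_l-\del\tilde v\otimes\bar\del\tilde v\|_\alpha\lesssim l^{2-\alpha}\|v_q\|_2^2$, forcing $l^{2-\alpha}\sim\delta_{q+1}\delta_q^{-1}\lambda_q^{-2}$. Chaining the $n$ frequency ratios $\mu_{j-1}/\mu_j^{1-\alpha}$ from $\mu_0=l^{-1}$ up to $\mu_n=\lambda_{q+1}$ and optimising in $b$ and $c$ then yields $c\to n+\tfrac12$ and hence $\beta<\frac{1}{2bc}\to\frac{1}{1+2n}$. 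Your account of the origin of $p>n+\tfrac12$ is essentially right: one needs $A\in C^{0,\kappa}$ with $\kappa>\frac{2}{1+2n}$, and $\kappa=2-\frac{2n}{p}$ via Morrey.
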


The proof is very similar as in the case of the real 2-Hessian announced recently in~\cite{LiQiu}. However we were able to achieve a better H\"{o}lder exponent thanks to a generalization of the kernel trick by Cao and Sz\'ekeleyhidi~\cite[Proposition 3.1]{CaoSzek19}, which in turn was motivated by the key proposition of~\cite{DIS18}. This generalization works (with slightly different proofs) for both real and complex 2-Hessian operators thus we may apply it to the analogus result in the real case to get an improvement of the result of Li and Qiu. Namely the following holds: 
\begin{Cor}
	Let $\Omega$ be a bounded simply-connected domain in $\R^n$. For any $ f \in L^p \left( \Omega \right) $ with $ p > \frac{2n + 1}{4} $ and any $ \beta < \frac{1}{1 + 2n} $ the $C^{1,\beta}$ the very weak solutions to
	\[
	S_2 (u) = f
	\]
	are dense in $C^0 \left( \overline{\Omega} \right) $.
\end{Cor}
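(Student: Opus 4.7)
The plan is to mirror the proof of Theorem \ref{Thm:main}, substituting the complex diagonalisation Proposition \autoref{Prop:diagonalizacja} by its real counterpart Proposition \autoref{Prop:diagonalizacja-R}, and replacing $\partial u\otimes\bar\partial u$ and $\operatorname{herm}\partial w$ by $\nabla u\otimes\nabla u$ and $\operatorname{sym}\nabla w$ throughout. Concretely, I would set up the very weak equation in matrix form
\[
-\tfrac12 \tr\,\curl_L\curl_R\bigl(A\bigr)=f,\qquad A\in\R^{n\times n}_{sym}(\Omega),
\]
and look for a subsolution triple $(u,w,A)$ satisfying $A=\tfrac12\nabla u\otimes\nabla u+\operatorname{sym}\nabla w+E$, the idea being to drive the ``defect'' $E$ to zero by iterative rank-one oscillatory perturbations while keeping the kernel constraint on $A$ intact.

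First I would fix a smooth strictly positive subsolution $(u_0,w_0,A_0)$ with defect $E_0$ compactly supported, matching the prescribed $C^0$-data on $\overline{\Omega}$; this is standard once $f\in L^p$ with $p>(2n+1)/4$ is mollified and the Riesz-type equation $-\tfrac12\tr\,\curl_L\curl_R A_0=f_\varepsilon$ is solved, exactly as in \cite{LiQiu,CaoSzek19}. Then at each stage $q$ of the iteration I would apply Proposition \autoref{Prop:diagonalizacja-R} to write the current positive-definite error matrix as a sum of only $N=2n$ rank-one pieces $\sum_{k=1}^{2n}\pm a_k^2\,\xi_k\otimes\xi_k$, with $\xi_k$ constant coordinate-like directions and $a_k$ smooth controlled coefficients. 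This is the point where the new result improves on Li--Qiu: their decomposition required $O(n^2)$ pieces, whereas here $N=2n$ directly gives the exponent $\tfrac{1}{1+2n}$.

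For each rank-one summand I would add a Nash-type corrector of the form
\[
u_{q+1}=u_q+\tfrac{1}{\lambda_q}\,a_k(x)\,\Gamma_1(\lambda_q\,\xi_k\cdot x),\qquad w_{q+1}=w_q+\tfrac{1}{\lambda_q^2}a_k(x)^2\,\Gamma_2(\lambda_q\,\xi_k\cdot x)\,\xi_k,
\]
with $\Gamma_1,\Gamma_2$ suitable $1$-periodic mean-zero profiles (primitives of $\sin^2-\tfrac12$ etc.) chosen so that, modulo fast-oscillation commutator errors, $\tfrac12\nabla u_{q+1}\otimes\nabla u_{q+1}+\operatorname{sym}\nabla w_{q+1}$ cancels one rank-one block of $E_q$. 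Performing the $2n$ corrections in succession yields a new triple with defect $E_{q+1}$ of size $\ll E_q$ in $C^0$. The frequencies $\lambda_q$ are chosen in a geometric progression $\lambda_q=\lambda_0^{b^q}$, and the standard commutator estimates for pointwise products with oscillating factors give H\"older control
\[
\|u_{q+1}-u_q\|_{1;\beta}\lesssim \lambda_q^{\beta}\|a_k\|_{C^2}\lambda_q^{-1},
\]
which converges precisely when $\beta<1/(1+2n)$.

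The main obstacle, and the place where the real case genuinely differs from the complex one, lies in Step 2: guaranteeing that the diagonalisation produces coefficients $a_k\in C^2$ with norms controlled linearly by $\|E_q\|_{C^2}$ and uniformly in $q$. This is the content of Proposition \autoref{Prop:diagonalizacja-R}, and once granted the iteration closes verbatim. The remaining bookkeeping, namely the Schauder-type interpolation between $C^1$ and $C^2$ bounds on $u_q$, the $C^{1,\beta}$ convergence, density in $C^0(\overline{\Omega})$, and the verification that the limiting $u$ satisfies $S_2(u)=f$ in the very weak sense, follows line by line the arguments of Sections 6 of the present note and of \cite{CaoSzek19,LiQiu}, with no genuinely new complication arising from the real (as opposed to complex) formulation.
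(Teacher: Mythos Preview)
Your overall strategy is correct and mirrors the paper's intended argument: replace Proposition \autoref{Prop:diagonalizacja} by its real analogue Proposition \autoref{Prop:diagonalizacja-R}, swap $\partial u\otimes\bar\partial u$ and $\herm\partial w$ for $\nabla u\otimes\nabla u$ and $\mathrm{sym}\,\nabla w$, and otherwise copy the iteration of Section~6 verbatim. The bookkeeping you describe at the end (interpolation, $C^{1,\beta}$ convergence, density, passing to the limit) is exactly right.

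There is, however, a genuine error in your description of the diagonalisation step which, if carried through, would \emph{not} yield the claimed exponent. You write that Proposition \autoref{Prop:diagonalizacja-R} decomposes the error into $N=2n$ signed rank-one pieces $\sum_{k=1}^{2n}\pm a_k^2\,\xi_k\otimes\xi_k$. That is not what the proposition says: it produces
\[
S+\kerk=\mathrm{diag}(d_1^2,\ldots,d_n^2),
\]
i.e.\ exactly $n$ \emph{positive} diagonal pieces $d_k^2\,e_k\otimes e_k$ plus an element $\kerk$ of the weak kernel. The kernel part is absorbed for free into $\kerk_{q+1}$ and costs no perturbation step. Hence the stage needs only $n$ corrugations, not $2n$. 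This count is precisely what governs the H\"older threshold: with $N$ corrugations per stage the argument of Section~6 gives $c\to N+\tfrac12$ and hence $\beta<\frac{1}{1+2N}$. With $N=n$ one obtains $\frac{1}{1+2n}$; with your $N=2n$ one would only reach $\frac{1}{1+4n}$. So the improvement over Li--Qiu (who use $\binom{n+1}{2}$ pieces) comes from the reduction to $n$, not to $2n$.

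Two minor points: your corrector for $w$ is missing the crucial transport term $-\frac{1}{\mu_j}\Gamma_1(d_j,\mu_j\xi_j\cdot x)\,\nabla\hat v_{j-1}$ (without it the mixed terms $\partial_t\Gamma_1\,\nabla\hat v_{j-1}$ do not cancel and the error estimate~\eqref{eq:perturbation-C0-est} fails), and the scaling on $\Gamma_2$ should be $\mu_j^{-1}$, not $\mu_j^{-2}$. Finally, there are no $\pm$ signs: the whole point of the smallness hypothesis $\|S-\Id\|_\alpha\le\sigma_1$ and the shift by $\delta_{q+1}\Id$ is to ensure positivity of the diagonal entries so that real square roots $d_k$ exist.
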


\section{Convex integration toolkit}

First let us recall an easy, but essential interpolation inequality. There is a constant dependent on $\alpha$ such that for any $C^1$ function $f$ we have
\begin{equation}\label{eq:Holder-interpolation}
	| f |_\alpha \leq C \| f \|_0^{1 - \alpha} | f |_1^{\alpha}.
\end{equation}

Next we would like to recall the relationship between the $C^k$ norms and mollification. The following are classical estimates that can be found in \cite{GiTr} or \cite{CaoSzek19}.

\begin{Lem}\label{Lem:mollify-est}
	Let $\phi$ be a standard mollifier (i.e. smooth nonnegative function, supported in the unit ball, rotationally symmetric and integrable to 1). By $\phi_l$ for any $l >0$ we denote $l^{-n} \phi \left( \frac{x}{l} \right) $. Then if $f * \phi_l$ is a convolution we get
	\begin{align}
		\| \nabla^{ (m) } \left( f * \phi_l \right)\|_0 &\leq \frac{ C_m \| f \|_0 }{ l^m };\label{uselateron} 
		\\
		| f * \phi_l |_{k;\alpha} &\leq C(l,\alpha) | f |_k; \label{eq:est-mollified-Holder}
		\\
		\| f - f * \phi_l \|_0 &\leq C \min \left\{ l^2 \| \nabla^2 f \|_0 , l \| \nabla f \|_0, l^\beta \| f \|_{0,\beta} \right\}; \label{eq:est-mollifier-roznica} \\
		\| (fg) * \phi_l  - (f * \phi_l)(g * \phi_l) \|_\alpha &\leq C \| f \|_1 \| g \|_1 l^{2-\alpha}. \label{eq:est-mollifier-iloczyn}
	\end{align}
	
\end{Lem}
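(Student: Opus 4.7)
The plan is to establish all four estimates by direct computation with the convolution integral, exploiting the standard scaling $\phi_l(x)=l^{-n}\phi(x/l)$ together with the basic properties of $\phi$: nonnegativity, $\int\phi=1$, support in the unit ball, and rotational symmetry (which crucially yields $\int y_i\,\phi(y)\,dy=0$). All four bounds are classical; I will simply indicate which property is doing the work in each case.

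For \eqref{uselateron} I would move the derivatives onto the mollifier, writing $\nabla^{(m)}(f*\phi_l)=f*\nabla^{(m)}\phi_l$, and then use the scaling $\nabla^{(m)}\phi_l(x)=l^{-m-n}(\nabla^{(m)}\phi)(x/l)$ together with Young's inequality to get $\|\nabla^{(m)}(f*\phi_l)\|_0\le\|f\|_0\,\|\nabla^{(m)}\phi_l\|_{L^1}\le C_m l^{-m}\|f\|_0$. For \eqref{eq:est-mollified-Holder}, if $f\in C^k$ one first observes $\nabla^k(f*\phi_l)=(\nabla^k f)*\phi_l$, which controls the $C^k$ part with constant depending only on $\|\phi\|_{L^1}$, and the $\alpha$-Hölder seminorm of $\nabla^k(f*\phi_l)$ is then estimated by interpolating between the $\|\cdot\|_0$ bound and the $\nabla$-bound from \eqref{uselateron}, picking up the $l^{-\alpha}$ factor absorbed into $C(l,\alpha)$.

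For \eqref{eq:est-mollifier-roznica} I would Taylor-expand $f$ at $x$ inside the identity $(f*\phi_l)(x)-f(x)=\int[f(x-y)-f(x)]\phi_l(y)\,dy$. The rotational symmetry forces the linear term $\int y\cdot\nabla f(x)\,\phi_l(y)\,dy$ to vanish, and the second-order remainder is bounded by $Cl^2\|\nabla^2 f\|_0$; the alternative bounds $l\|\nabla f\|_0$ and $l^\beta\|f\|_{0,\beta}$ follow respectively from the mean-value theorem applied directly in the difference $f(x-y)-f(x)$ and from plugging in the Hölder inequality.

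The only mildly subtle step is \eqref{eq:est-mollifier-iloczyn}, which is the Constantin--E--Titi commutator estimate. The key identity, obtained by writing the product of convolutions as an iterated integral and symmetrizing in the two dummy variables, is
\[
(fg)*\phi_l(x)-(f*\phi_l)(x)(g*\phi_l)(x)=\frac12\iint\bigl[f(x-y)-f(x-y')\bigr]\bigl[g(x-y)-g(x-y')\bigr]\phi_l(y)\phi_l(y')\,dy\,dy'.
\]
The pointwise bound $Cl^2\|\nabla f\|_0\|\nabla g\|_0$ is immediate from $|y-y'|\le 2l$ on the support. For the $\alpha$-Hölder part, I would split the increment in $x$ across one of the bracketed factors and bound that factor by its $C^1$ norm times $|x-x'|$ combined with the other factor's $C^0$ bound, then interpolate: the cost is an $l^{-\alpha}$ loss from differentiating once, turning the $l^2$ into $l^{2-\alpha}$ with the prefactor $\|f\|_1\|g\|_1$. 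The commutator identity is the main obstacle conceptually; the remaining manipulations are bookkeeping with scaling and Young's inequality.
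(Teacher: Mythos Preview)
The paper does not prove this lemma at all: it simply records the four inequalities as ``classical estimates that can be found in \cite{GiTr} or \cite{CaoSzek19}'' and moves on. Your sketch is a correct and standard derivation of all four bounds---the scaling/Young argument for \eqref{uselateron}, interpolation for \eqref{eq:est-mollified-Holder}, Taylor expansion with the odd-moment cancellation for \eqref{eq:est-mollifier-roznica}, and the symmetrized double-integral identity (Constantin--E--Titi) for \eqref{eq:est-mollifier-iloczyn}---so there is nothing to compare: you have supplied a proof where the paper only supplies citations.
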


Next we recall the additional functions used to perturb  an approximate solution in the iteration scheme.
Keeping with the notation introduced in~\cite{CaoSzek19} we take for the perturbations the following functions:
\[
\Gamma_1 (s,t) = \frac{s}{\pi} \sin (2\pi t), \quad
\Gamma_2 (s,t) = -\frac{s^2}{4\pi} \sin (4\pi t)
\]
and note that they satisfy
\begin{align}\label{eq:perturbacje-rown}
	\Gamma(s, t+1) &= \Gamma(s,t), \nonumber \\
	\frac{1}{2}|\del_t\Gamma_1(s,t)|^2 + \del_t\Gamma_2(s,t) &= s^2.
\end{align}
along with estimates
\begin{align}
	\begin{split}\label{eq:perturbacje-est}
	| \del^k_t \Gamma_1 | + | \del_s \del_t^k \Gamma_2 | &\leq C_ks; 
	\\
	| \del_s \del_t^k \Gamma_1 | &\leq C_k; 
	\\
	| \del_t^k \Gamma_2 | &\leq C_ks^2
	\end{split}
\end{align}
for any nonnegative integer $k$. Here $C_k$ are numerical constants dependent only on $k\in\mathbb N$.

\section{Diagonalization}

In this section we will prove the analogue of the diagonalization proposition from~\cite[Proposition 3.1]{CaoSzek19}. In order to simplify the notation from now on we shall write $\mathrm{curl}\overline{\mathrm{curl}}$
instead of $\mathrm{curl}_L\overline{\mathrm{curl}}_R$.

\begin{Prop}\label{Prop:diagonalizacja}
	For any $j \in \N , \ 0 < \alpha < 1$ there are constants $M_1, M_2, \ldots$ and $\tilde{\sigma}$ depending on $j, \alpha$ and $n$ such that the following holds. If $H \in C^{j,\alpha} \left( \Omega, \C^{n \times n}_{\herm} \right)$ satisfies
	\[
	\| H - \Id \|_\alpha \leq \tilde{\sigma},
	\]
	then there exist $\kerk \in C^{j+1, \alpha} \left( \Omega, \C^{n \times n} \right) \, \cap \, \ker_w \left( \mathrm{curl}\overline{\mathrm{curl}} \right)$ and $d_1,\ldots,d_n \in C^{j, \alpha} \left( \Omega, \R \right)$ such that
	\begin{equation}\label{eq:diag-potencjal}
		H + \kerk = diag(d^2_1,\ldots,d^2_n)
	\end{equation}
	and the following estimates hold:
	\begin{align}
		\begin{split}\label{eq:diago-est}
			\sum_{k=1}^n \| d_k - 1 \|_\alpha + \| \kerk \|_\alpha &\leq M_1 \| H - \Id \|_\alpha \\
			\sum_{k=1}^n\| d_k \|_{j ; \alpha} + \| \kerk \|_{j ; \alpha} &\leq M_j \| H - \Id \|_{j ; \alpha}.
		\end{split}
	\end{align}
\end{Prop}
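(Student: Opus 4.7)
The plan is to exploit the Hermitian structure. Since $H$ is Hermitian and the target $diag(d_1^2,\ldots,d_n^2)$ is real diagonal, any admissible $\kerk$ must itself be Hermitian, and so all its off-diagonal entries are forced: $\kerk_{i\bar{j}}=-H_{i\bar{j}}$ for $i\neq j$, leaving only the real diagonal entries $u_j := \kerk_{j\bar{j}}$ as free parameters. The sole remaining obstruction is the kernel condition~\eqref{eq:2-Hess-kernel}; inserting the forced off-diagonal values turns it into the scalar PDE
$$
\sum_{j=1}^n \Delta_j u_j \;=\; -\sum_{i\neq j}\del_{z_i}\del_{\bar{z}_j} H_{j\bar{i}},
$$
where Hermitian symmetry has been used to combine the real-part sum into a single sum over $i\neq j$.

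I would then take the simplest ansatz $u_1 = \cdots = u_n =: u$, which collapses the equation to $(n-1)\Delta u = -\sum_{i\neq j}\del_{z_i}\del_{\bar{z}_j} H_{j\bar{i}}$ with $\Delta = \sum_k \del_{z_k}\del_{\bar{z}_k}$. Since the right-hand side is already of the form $\del\bar{\del}$ applied to entries of $H$, instead of appealing to Schauder theory I write the solution by commuting the derivatives past the Newtonian potential:
$$
u \;=\; -\frac{1}{n-1}\sum_{i\neq j}\del_{z_i}\del_{\bar{z}_j}\Delta^{-1} H_{j\bar{i}}.
$$
Each $\del_{z_i}\del_{\bar{z}_j}\Delta^{-1}$ is a second-order Riesz (Calder\'on--Zygmund) operator, hence bounded on $C^{k,\alpha}$ for every $0<\alpha<1$ and every $k\geq 0$. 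To make sense of these operators on the bounded domain $\Omega$ I would first extend $H-\Id$ to a compactly supported function on $\C^n$ by a Whitney/Seeley extension preserving the $C^{j,\alpha}$-norm up to a constant depending only on $\Omega$, $j$, $\alpha$, $n$; then apply the Riesz transforms globally; and finally restrict back to $\Omega$.

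Given this $u$, Calder\'on--Zygmund boundedness yields $\|u\|_{j;\alpha}\leq C_j \|H-\Id\|_{j;\alpha}$. Shrinking $\tilde{\sigma}$ if necessary keeps $H_{j\bar{j}}+u$ pointwise close to $1$, so $d_j := \sqrt{H_{j\bar{j}}+u}$ is well-defined; composition with the smooth map $t\mapsto \sqrt{t}$ near $t=1$ then yields $\|d_j-1\|_{j;\alpha}\leq C_j\|H-\Id\|_{j;\alpha}$. Combining the trivial bound on the off-diagonal entries of $\kerk$ with the bound on $u$ gives~\eqref{eq:diago-est}, while $\kerk\in\ker_w(\curl\overline{\curl})$ holds by construction because $u$ satisfies the corresponding Poisson equation at least distributionally.

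The main obstacle I foresee is the boundary issue: the Riesz transforms are naturally defined on all of $\C^n$, so the extension step must be executed carefully enough that all constants in the final H\"older estimates depend only on $\Omega$, $j$, $\alpha$, $n$, and not on $H$ itself. A secondary technical point is ensuring that this construction is compatible with the mollification steps used in the iteration of Section~6, so that the weak kernel identity is preserved throughout. Both issues are standard but require careful bookkeeping.
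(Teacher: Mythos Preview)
Your approach is correct and lands on the same ansatz as the paper, but gets there more directly. The paper spends most of the proof on an extended differential-form computation (introducing an auxiliary $(2n-2)$-form $F$, a $(1,1)$-form $G$, and potentials $\psi^{j\bar{k}}$) whose output is exactly what you wrote down at the outset: the off-diagonal entries $\kerk_{i\bar{j}}=-H_{i\bar{j}}$ are forced, and all diagonal entries are equal to a single real function built from second-order zero-order operators applied to the off-diagonal data of $H$. The only substantive difference is the elliptic tool. You extend $H-\Id$ to $\C^n$ and invoke Calder\'on--Zygmund boundedness of $\del_{z_i}\del_{\bar{z}_j}\Delta^{-1}$ on H\"older spaces; the paper instead solves the zero-boundary Dirichlet problem $2\Delta\psi^{j\bar{k}}=-h^{j\bar{k}}$ on $\Omega$ and appeals to Schauder estimates, writing the common diagonal entry as $2\sum_{l\neq m}\del_{l\bar{m}}\psi^{l\bar{m}}$. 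The paper's route stays on $\Omega$ throughout, so the extension issue you flag simply does not arise there; on the other hand your argument is considerably shorter and makes the structure of $\kerk$ transparent without the form calculus. Both give the same $C^{j,\alpha}$ control, and in fact neither actually delivers the stated $\kerk\in C^{j+1,\alpha}$ (the off-diagonal entries inherit only the regularity of $H$), but this is harmless since only the estimates~\eqref{eq:diago-est} are used downstream. Your secondary worry about compatibility with the mollification in Section~6 is also moot: Proposition~\ref{Prop:diagonalizacja} is applied afresh at each stage to already-smooth data, so no uniformity across mollification scales is needed.
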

\begin{Rem}
	By $\mathrm{ker}_w$ we mean the space of weak solutions to the kernel equation~\eqref{eq:2-Hess-kernel}. The exact conditions defining this space will be written down shortly, at the begining of the proof. 
\end{Rem}

\begin{proof}
	Let us start by denoting the entries of $H$ by $h^{j\bar{k}}$. 
	
	Next, we will write down the weak formulation of the kernel equation~\eqref{eq:2-Hess-kernel}. Fix a test function $\varphi$:
	\begin{align}
		0 &= \int_\Omega \varphi \, \tr \curl\overline{\curl}\kerk 
		= 
		\int_\Omega \varphi \left( \sum_{j} \Delta_j \kerk^{jj} - \sum_{k \neq j} \del^2_{k\bar{j}}\kerk^{j\bar{k}} \right) \nonumber \\
		\begin{split}\label{eq:cx-Hessian-slabe-jadro}
			&= -\int_\Omega \sum_j \del_{j}\varphi \frac{1}{2} \left( \sum_{k \neq j } \del_{\bar{j}} \kerk^{k\bar{k}} - \del_{\bar{k}} \kerk^{k\bar{j}} \right) + \sum_j \del_{\bar{j}}\varphi \frac{1}{2} \left( \sum_{k \neq j } \del_{j} \kerk^{k\bar{k}} - \del_{k} \kerk^{j\bar{k}} \right).
		\end{split}
	\end{align}
	We wish to represent  the above integral as
	\[
	\int_\Omega  d\varphi \wedge dF
	\]
	for some $(2n-2)$-form $F$. Let us denote
	\begin{align*}
		dz^{j,k} &:= \frac{i}{2}dz_1 \wedge d\bar{z}_1 \wedge \ldots \wedge \frac{i}{2}dz_{j-1} \wedge d\bar{z}_{j-1} \wedge \frac{i}{2}dz_{j+1} \wedge d\bar{z}_{j+1} \wedge \ldots \wedge \\
		&\wedge \frac{i}{2}dz_{k-1} \wedge d\bar{z}_{k-1} \wedge \frac{i}{2}dz_{k+1} \wedge d\bar{z}_{k+1} \wedge \ldots \wedge \frac{i}{2}dz_{n} \wedge d\bar{z}_{n} \\
		dz^{j} &:= dz^{j,j} \ \ {\rm i.e.\ only\ } \frac{i}{2} d{z}_j \wedge d\bar{z}_j\ {\rm is\ omitted}.
	\end{align*}
	Let us also recall that the canonical volume form in $\C^n$ can be expressed as
	\[
	d\mathrm{vol} = \bigwedge_{j = 1}^n \frac{i}{2} dz_j \wedge d\bar{z}_j,
	\]
	from which it follows that the Hodge star operator works as
	\[
	*\left( \frac{i}{2} dz_j \wedge d\bar{z}_k \right) = -\frac{i}{2} d{z}_j \wedge d\bar{z}_k \wedge dz^{j,k}, \quad *\left( \frac{i}{2} dz_j \wedge d\bar{z}_j \right) = dz^{j} .
	\]
	In such a notation $dF$ can be expressed as
	\[
	dF = \sum_{j} F^j \frac{i}{2} dz_j \wedge dz^j + F^{\bar{j}} \frac{i}{2} d\bar{z}_j \wedge dz^{j}
	\]
	for some complex valued functions $F^j, F^{\bar{j}},\, j = 1,\ldots, n$. Comparing with~\eqref{eq:cx-Hessian-slabe-jadro} we arrive at the following identities
	\begin{align}
		\begin{split}\label{eq:dF-kernel}
			F^{\bar{j}} &=\frac12\sum_{k \neq j } \del_{\bar{j}} \kerk^{k\bar{k}} - \del_{\bar{k}} \kerk^{k\bar{j}}; \\
			F^{j} &=\frac12 \sum_{k \neq j } \del_{k} \kerk^{j\bar{k}} - \del_{j} \kerk^{k\bar{k}}.
		\end{split}
	\end{align}
	
	Now we require that $F$ to be of the form $*G$ for some $2$-form $G$. We can be even more specific and require that $G$ be a $(1,1)$-form. In that case let us denote the coefficients of $G$ by
	\[
	G = \sum_{j \neq k} \frac{i}{2} g^{j\bar{k}} \, \frac{i}{2} dz_j \wedge d\bar{z}_k + \sum_j \frac{i}{2} g^{j \bar{j}} \, \frac{i}{2} dz_j \wedge d\bar{z}_j.
	\]
	
	We can now define the coefficients of $G$ as
	\begin{align}
		\begin{split}\label{eq:potencjaly-2-formy}
			g^{k\bar{j}} &= \del_{j\bar{j}}\psi^{j\bar{k}} - \del_{k\bar{k}} \psi^{j\bar{k}} - \sum_{l \notin \{j,k \} } \del_{k\bar{l}} \psi^{j\bar{l}} + \sum_{l \notin \{j,k \} } \del_{l\bar{j}}\psi^{l\bar{k}}\ {\rm\ if}\ k\neq j, \\
			g^{j\bar{j}} &= \sum_{k \neq j} \del_{k\bar{j}}\psi^{k\bar{j}} - \sum_{k \neq j} \del_{j\bar{k}} \psi^{j\bar{k}}k,
		\end{split}
	\end{align}
	where $\psi^{k\bar{j}}$ are smooth complex valued functions to be determined later on.
	On the other hand, since $dF = d *G$ for the coefficients of $dF$ we must have
	\begin{align}
		\begin{split}\label{eq:dF=d*G}
			F^j &= \del_{j}g^{j\bar{j}} + \sum_{k \neq j} \del_{k} g^{j\bar{k}}, \\
			F^{\bar{j}} &= \del_{\bar{j}}g^{j\bar{j}} + \sum_{k \neq j} \del_{\bar{k}} g^{k\bar{j}}.
		\end{split}
	\end{align}
	Therefore combining~\eqref{eq:dF-kernel},~\eqref{eq:potencjaly-2-formy} and~\eqref{eq:dF=d*G} we arrive at the following formula
	\begin{align*}
		\frac{1}{2} \sum_{k \neq j} \del_{\bar{j}} \kerk^{k\bar{k}} - \del_{\bar{k}} \kerk^{k\bar{j}} = &\sum_{k \neq j} \del_{\bar{j}k\bar{j}} \psi^{k\bar{j}} - \del_{\bar{j}j\bar{k}}\psi^{j\bar{k}}
		\\
		+ &\sum_{k \neq j} \left( \del_{\bar{k}j\bar{j}}\psi^{j\bar{k}} - \del_{\bar{k}k\bar{k}} \psi^{j\bar{k}} - \sum_{l \notin \{j,k \} } \del_{\bar{k}k\bar{l}} \psi^{j\bar{l}} + \sum_{l \notin \{j,k \} } \del_{\bar{k}l\bar{j}}\psi^{l\bar{k}} \right)
		\\
		= & \sum_{k \neq j} -\del_{\bar{k}} \Delta \psi^{j\bar{k}} + \sum_{k \neq j} \del_{\bar{j}} \left(  \sum_{ l \neq m} \del_{l\bar{m}} \psi^{l \bar{m}} \right)
	\end{align*}
	and for $F^j$ we get 
	\begin{align*}
		\frac{1}{2} \sum_{k \neq j} \del_k \kerk^{j\bar{k}} - \del_j \kerk^{k\bar{k}} = &\sum_{k \neq j} \del_{jk\bar{j}}\psi^{k\bar{j}} - \del_{jj\bar{k}}\psi^{j\bar{k}} 
		\\
		+ &\sum_{k \neq j} \left( \del_{kk\bar{k}}\psi^{k\bar{j}} - \del_{kj\bar{j}} \psi^{k\bar{j}} - \sum_{l \notin \{j,k \} } \del_{kj\bar{l}} \psi^{k\bar{l}} + \sum_{l \notin \{j,k \} } \del_{kl\bar{k}}\psi^{l\bar{j}} \right) 
		\\
		= &\sum_{k \neq j} \del_k \Delta \psi^{k\bar{j}} - \sum_{k \neq j} \del_j \left( \sum_{l \neq m} \del_{l\bar{m}}\psi^{l\bar{m}} \right).
	\end{align*}
	
	Therefore we may define the entries of the weak kernel as 
	\[
	\kerk^{k\bar{j}} = 2 \Delta \psi^{j\bar{k}}, \quad \kerk^{j\bar{j}} = 2 \sum_{l \neq m} \del_{l\bar{m}} \psi^{l\bar{m}}
	\]
	for any collection of regular enough functions $\psi^{j\bar{k}}$. In particular we may require that they solve the zero-boundary Dirichlet problem $2 \Delta \psi^{j\bar{k}} = -h^{j\bar{k}}$ $(j\neq k)$, which then makes $\kerk$ hermitian. We get the desired inequalities for $\kerk$ from the Schauder estimates for the entries of $\psi^{j\bar{k}}$ in the following way: the off diagonal entries are small in $C^{\alpha}$ norm from the assuptions made on $H$, whereas the diagonal entries are then small due to the $C^{2,\alpha}$ smallness of $\psi^{j\bar{k}}$ and the formula above. The estimates for the diagonal entries $(d^2_1 , \ldots, d^2_n)$ easily follow from here since we may take $\tilde{\sigma}$ small enough so that $H + \kerk$ is sufficiently close to $\Id$.
\end{proof}

\section{Proof}
As the details below are somewhat technical (especially for non-specialists in convex integration) we briefly sketch the main points. First it is fairly easy to solve
\[
-\frac{1}{2} \tr \left( \curl\overline{\curl} \, A \right) = f.
\]
for an unknown {\it matrix} $A$ and the solution is very far from being unique. Thus we find a solution which is a diagonal matrix. Next the main point is to decompose $A$ into $\del v \otimes \bar\del v $ plus an element belonging to the kernel of $\tr\,\curl\overline{\curl} $ of the form $\herm w + \kerk$. This is done through an iteration procedure. First one picks a well chosen {\it approximate solution} so that each $v,w$ and $\kerk$ are under control in suitable norms and the difference, dubbed the deficit tensor, is also controlled. Then the main technical tool (see Proposition \ref{Prop:Stage}) is that one can find {\it better} approximants, with even smaller deficit tensor. Thus the limiting data of such a procedure, provided it exists, will give the desired decomposition.

From the sketch above it is clear that such an iteration scheme crucially relies on two elements: on the possibility to perform the iteration step and on the convergence in the limit. The second issue is resolved by keeping track of the smallness of norms of the differences of the data from two consecutive steps. The first one is more technical in nature: the idea is to first mollify the given data at a given step using a precisely described mollifier parameter and then to perturb the outcome through adding small-amplitude but high-frequency functions. The quadratic nonlinearity of $S_2^{\mathbb C}$ is heavily used exactly in this perturbation procedure.

We proceed with  details of the proof of Theorem \autoref{Thm:main}:

{\bf Preparation}. Let us fix $n$ to be the complex dimension of $\Omega$. For now we are looking for a matrix solution to the equation
\[
-\frac{1}{2} \tr \left( \curl\overline{\curl} \, A \right) = f.
\]
Notice that putting $A = (u + \tau)\mathrm{Id}$ for any constant $\tau$ gives 
\begin{equation}\label{AB}	
-\frac{1}{2} \tr \left( \curl\overline{\curl}\,\left[ (u + \tau)\mathrm{Id} \right] \right) = (1-n)\Delta u = f
\end{equation}
and this is always weakly solvable with $W^{2,p} ( \Omega )$ solution provided $f \in L^p\left( {\Omega} \right),\ p>1$. 

We will solve~\eqref{eq:MA-vw} if we are able to represent $A$ as
\[
A = \del v \otimes \bar\del v \ + \herm w + \kerk
\]
for some function $v: \Omega \rightarrow \R$, map $w: \Omega \rightarrow \C^n$ and some element of the kernel $\kerk : \Omega \rightarrow \C^{n \times n}_
\herm$ (of course $\herm w$ is also in the kernel of $S_2^\C$, but it serves a different purpose than $\kerk$, so it is better to write them apart). Let us recall that in order for that to be possible we must have $v \in W^{1,2}(\Omega)$ which is always satisfied whenever $\del v \otimes \bar{\del} v \in W^{2,p}$, for any $p \geq 1$. Moreover, for the proof of flexibility we require $A$ to be H\"{o}lder continuous (thus extandable to $\overline{\Omega}$), which by Morrey's theorem imposes the assumption $p > n$. Then $A \in C^{0,\kappa}$ with $\kappa = 2 - \frac{2n}{p}$, and hence $u \in C^{0,\kappa}$.

By density of $C^\infty(\overline{\Omega})$ in $C^0(\overline{\Omega})$, to prove our main theorem it is enough to approximate any smooth function with solutions to \eqref{eq:MA-vw}. Thus, let $u^{\flat}$ be any fixed smooth real-valued function defined on $\overline{\Omega}$. Recall that $u$ solves the Poisson equation \ref{AB}. Using a neat idea from~\cite{CHI23} let us fix
\[
\tau := \left( K + \sigma^{-1} \right) \left( \| u \|_\kappa + C(\Omega) \| u^\flat \|_2^2 + 100 \right),
\]
with $K$ and $\sigma$ coming from Proposition~\ref{Prop:Stage} below. We put $\hat{A} := \delta_1 \tau^{-1} A$, where $\delta_1$ is defined in~\eqref{eq:stale-do-iteracji} and notice that if we find a solution $ \left( v, w, \kerk \right)$ to 
\[
\cchermp{v}{w} + \kerk  = \hat{A}
\]
then $\left( \delta_1^{-1/2}\tau^{1/2}v, \delta_1^{-1}\tau w, \delta_1^{-1} \tau \kerk \right)$ is the solution to
\[
\cchermp{v}{w} + \kerk = A
\]
and thus solves~\eqref{eq:MA-vw}. However for such a choice of $\hat{A}$ the triple $\left( \delta_1^{1/2} \tau^{-1/2} u^\flat, 0, 0 \right)$ satisfies the assumptions of Proposition~\autoref{Prop:Stage} at step $q=0$ with $A$ replaced by $\hat{A}$.

\subsection{Stage}

Here we will construct the stage iteration that will be crucial for the regularity of the final solution. 

At this point we may define the amplitudes $\delta$ and frequencies $\lambda$ as
\begin{equation}\label{eq:stale-do-iteracji}
	\delta_q := a^{-b^q} \qquad \lambda_q := a^{cb^{q+1}}. 
\end{equation}
for some positive constants $a, b, c > 1$ to be determined later. We also define the deficit tensor as
\[
D_q := A \cchermm{v_q}{w_q} - \kerk_q -\delta_{q+1} \Id.
\]
$D_q + \delta_{q+1}\Id$ measures the failure of the data $(v_q,w_q, \kerk_q)$ at step $q$ to solve the equation. Let us also define $\alpha$ as a H\"{o}lder index between 0 and some fixed $\alpha_0 \leq \kappa$.

The construction of the proof will mostly follow~\cite{LiQiu}, however to apply Proposition \autoref{Prop:diagonalizacja} we need to pay the price of controlling the H\"{o}lder norm of the deficit tensor which brings additional estimates, more in line with~\cite{CaoSzek19}. 

\begin{Prop}[Stage]\label{Prop:Stage}
	There are positive constants $K > 1$ and $\sigma > 0$ for which the following holds: if function a $v_q$ and a map $w_q$ both of class $C^2$ as well as a matrix field $\kerk_q$ of class $C^1$ satisfy
	\begin{align}
		\| v_q \|_1 + \| w_q \|_1 + \| \kerk_q \|_0 &\leq K,  \label{eq:C1-est-zalozenie}
		\\
		\| v_q \|_2 + \| w_q \|_2 + \| \kerk_q \|_1  &\leq K \delta^{1/2}_q\lambda_q,\label{usetocite} \\
		\| D_q \|_\alpha &\leq \sigma\delta_{q+1},\label{eq:deficit-Holder-est}
	\end{align}
	then there is a function $v_{q+1}$, a map $w_{q+1}$, both of class $C^2$, and a matrix field $\kerk_{q+1}$ of class $C^1$ in the weak kernel of $\curl \, \overline{\curl}$ such that
	\begin{align}
		\| v_{q+1} - v_q \|_1 \leq K\delta_{q+1}^{1/2}, \quad \| w_{q+1} - w_q \|_1 &\leq K \delta^{1/2}_{q+1}, \label{eq:stage-krok-C1-est} 
		\\
		\| v_{q+1} \|_2 + \| w_{q+1} \|_2 + \| \kerk_{q+1} \|_1 &\leq K \delta^{1/2}_{q+1} \lambda_{q+1}, \label{eq:stage-krok-C2-est} \\
		\| D_{q+1} \|_\alpha &\leq \sigma \delta_{q+2}. 
		\label{eq:stage-blad-Holder-est}
	\end{align}
	
\end{Prop}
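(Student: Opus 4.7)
The plan is to follow the Cao--Sz\'ekelyhidi scheme adapted to the complex setting, where the Stage proposition is obtained by composing $n$ inner \emph{steps}, one for each complex diagonal direction. Since the quadratic form $\tfrac12 \partial v \otimes \bar\partial v$ allows one to target only squared real quantities, the gain over Li--Qiu's approach (which needs roughly $n^2$ steps) comes precisely from the diagonalization Proposition~\ref{Prop:diagonalizacja}: it converts a full hermitian deficit into a diagonal real one at the cost of a kernel element, so that only $n$ perturbations suffice.

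First I would mollify $(v_q, w_q, \kerk_q)$ at a length scale $\ell$ to be tuned, say $\ell \sim \lambda_q^{-1}\delta_q^{-1/2}$. Lemma~\ref{Lem:mollify-est} then gives $C^{j,\alpha}$ bounds on the mollified triple with losses controlled by negative powers of $\ell$, and estimate~\eqref{eq:est-mollifier-iloczyn} controls the quadratic commutator $(\del v_q\otimes\bar\del v_q)*\phi_\ell - \del v_q^\ell\otimes\bar\del v_q^\ell$ in $C^\alpha$ by $C\|v_q\|_1^2\,\ell^{2-\alpha}$. After mollification the shifted deficit $\hat D_q := D_q*\phi_\ell + \delta_{q+1}\Id$ satisfies $\|\delta_{q+1}^{-1}\hat D_q - \Id\|_\alpha \leq \tilde\sigma$ provided $\sigma$ in~\eqref{eq:deficit-Holder-est} is chosen small. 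I would then apply Proposition~\ref{Prop:diagonalizacja} to $H := \delta_{q+1}^{-1}\hat D_q$ to obtain $\kerk'\in\ker_w(\curl\overline{\curl})$ and scalars $d_1,\dots,d_n$ with $\hat D_q + \delta_{q+1}\kerk' = \delta_{q+1}\,\mathrm{diag}(d_1^2,\dots,d_n^2)$ and $\|d_k-1\|_\alpha, \|\kerk'\|_\alpha \lesssim \tilde\sigma$.

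Next I run $n$ inner perturbation steps indexed by $k=1,\dots,n$. At step $k$ one adds
\[
\delta v^{(k)} = \delta_{q+1}^{1/2}\lambda_{q+1}^{-1}\,\Gamma_1\!\bigl(d_k,\lambda_{q+1}\,\mathfrak{R}(z_k)\bigr),
\qquad
\delta w^{(k)} \text{ chosen so that } \herm\del(\delta w^{(k)}) \text{ absorbs the } \Gamma_2\text{-term},
\]
and uses the defining identity~\eqref{eq:perturbacje-rown}, i.e.\ $\tfrac12|\del_t\Gamma_1|^2+\del_t\Gamma_2 = s^2$, to cancel the $k$-th diagonal entry $\delta_{q+1}d_k^2$ from the deficit. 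Estimates~\eqref{eq:perturbacje-est} give $\|\delta v^{(k)}\|_1 \lesssim \delta_{q+1}^{1/2}$ and $\|\delta v^{(k)}\|_2 \lesssim \delta_{q+1}^{1/2}\lambda_{q+1}$, which accumulated over $n$ steps yield~\eqref{eq:stage-krok-C1-est} and~\eqref{eq:stage-krok-C2-est} after choosing $K$ large. I set $v_{q+1} := v_q^\ell + \sum_k \delta v^{(k)}$, $w_{q+1} := w_q^\ell + \sum_k \delta w^{(k)}$, and $\kerk_{q+1} := \kerk_q^\ell + \delta_{q+1}\kerk'$, which lies in the weak kernel by Proposition~\ref{Prop:diagonalizacja}.

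The main obstacle is the $C^\alpha$ bound~\eqref{eq:stage-blad-Holder-est} on the new deficit. The error splits into four contributions: (i) mollification errors of size $\ell^{2-\alpha}\delta_q\lambda_q^2$ from~\eqref{eq:est-mollifier-iloczyn} and~\eqref{eq:est-mollifier-roznica}; (ii) the cross terms between different perturbation steps $k\neq k'$, which do not cancel pointwise but average out on scale $\lambda_{q+1}^{-1}$ and are estimated by the stationary-phase/interpolation bound $\|\Gamma_1(d_k,\lambda z_k)\cdot\Gamma_1(d_{k'},\lambda z_{k'})\|_\alpha \lesssim \lambda_{q+1}^{-1+\alpha}$ after using~\eqref{eq:Holder-interpolation}; (iii) transport-type errors from the slowly varying amplitude $d_k$ interacting with its oscillating argument, again controlled by $\lambda_{q+1}^{-1}\|d_k\|_1$; and (iv) the $C^\alpha$ cost of the added $\delta_{q+1}\kerk'$, which is absorbed thanks to the quantitative diagonalization estimate~\eqref{eq:diago-est}. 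Balancing all four against $\sigma\delta_{q+2} = \sigma a^{-b^{q+2}}$ and matching $\lambda_{q+1}^{-1+\alpha}\delta_{q+1}$ with $\delta_{q+2}$ forces the scaling $\delta_{q+1}^{1/2}\lambda_{q+1}^{(1+2n)\alpha/\text{something}} \ll 1$ that ultimately dictates the threshold $\beta < 1/(1+2n)$. The delicate point is performing this balance uniformly in $q$ while keeping $\sigma$ and $K$ fixed; the constants $a,b,c$ in~\eqref{eq:stale-do-iteracji} are to be chosen (in this order) large enough that all four error types close the induction.
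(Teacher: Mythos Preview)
Your overall architecture (mollify, diagonalize via Proposition~\ref{Prop:diagonalizacja}, then run $n$ perturbation steps) matches the paper, but two essential mechanisms are missing, and without them the induction does not close.

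First, the $n$ inner steps in the paper are \emph{sequential}, not parallel: one sets $\hat v_0=\tilde v$, $\hat w_0=\tilde w$ and then
\[
\hat v_j=\hat v_{j-1}+\tfrac{1}{\mu_j}\Gamma_1(d_j,\mu_j x_j),\qquad
\hat w_j=\hat w_{j-1}-\tfrac{1}{\mu_j}\Gamma_1(d_j,\mu_j x_j)\,\bar\del\hat v_{j-1}+\tfrac{1}{\mu_j}\Gamma_2(d_j,\mu_j x_j)\,e_j.
\]
The crucial term $-\tfrac{1}{\mu_j}\Gamma_1\,\bar\del\hat v_{j-1}$ in $\hat w_j$ is precisely what cancels the cross terms $\del(\delta v^{(j)})\otimes\bar\del(\hat v_{j-1})$ that you list under~(ii). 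Your claimed bound $\|\Gamma_1(d_k,\lambda z_k)\,\Gamma_1(d_{k'},\lambda z_{k'})\|_\alpha\lesssim\lambda_{q+1}^{-1+\alpha}$ is false: that product is $O(1)$ in $C^0$ and $O(\lambda_{q+1})$ in $C^1$, so its $C^\alpha$ norm is $\sim\lambda_{q+1}^{\alpha}$, and the resulting contribution to the deficit is $O(\delta_{q+1}\lambda_{q+1}^\alpha)$, not $O(\delta_{q+2})$. There is no stationary-phase gain unless the term is first absorbed into $\herm\del w$, which is exactly what the sequential definition of $\hat w_j$ does.

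Second, the frequencies $\mu_1<\mu_2<\dots<\mu_n=\lambda_{q+1}$ must \emph{strictly increase}; you take them all equal to $\lambda_{q+1}$. After the sequential cancellation above, the residual error at step $j$ is
\[
\mathcal E_j=-\tfrac{1}{\mu_j}\Gamma_1\,\del\bar\del\hat v_{j-1}+(\text{lower order}),
\]
and since the previous steps have already roughened $\hat v_{j-1}$ to $|\hat v_{j-1}|_2\sim\delta_{q+1}^{1/2}\mu_{j-1}$, one gets $\|\mathcal E_j\|_0\lesssim\delta_{q+1}\mu_{j-1}/\mu_j$ and, by interpolation, $\|\mathcal E_j\|_\alpha\lesssim\delta_{q+1}\mu_{j-1}/\mu_j^{1-\alpha}$. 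With all $\mu_j=\lambda_{q+1}$ this ratio is $1$ and the deficit stays at the level $\delta_{q+1}$. In the paper one fixes $\mu_0=l^{-1}$ and chooses the intermediate $\mu_j$ so that each ratio $\mu_{j-1}/\mu_j^{1-\alpha}$ is of order $\delta_{q+2}/\delta_{q+1}$; it is exactly the $n$-fold telescoping $\mu_n/\mu_0\sim(\delta_{q+1}/\delta_{q+2})^{n}$ (up to $\alpha$-corrections) that produces the exponent $1/(1+2n)$ you quote. Without the graded frequencies and the sequential $w$-correction,~\eqref{eq:stage-blad-Holder-est} cannot be reached.
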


\begin{proof}
	First, we mollify $A, v_q, w_q$ and $\kerk_q$ on a scale $l$ to get $\tilde{A}, \tilde{v}, \tilde{w}$ and $\tilde{\kerk}$. Then we define the following matrix
	\[
	\tilde{D} := \tilde{A} \cchermm{\tilde
		{v}}{\tilde{w}} - \tilde{\kerk} - \delta_{q+2} \Id.
	\]
	For convenience we define the following constant
	\begin{equation}\label{eq:nu-def}
	 \nu := K \delta_q^{1/2} \delta_{q+1}^{-1/2} \lambda_q.
	\end{equation}
	Let us note that with such a definition we have the identity
	\begin{equation}\label{eq:est-C2-inaczej}
		K \delta_q^{1/2} \lambda_q = \nu \delta_{q+1}^{1/2}.
	\end{equation}

	By mollifiaction estimates in Lemma~\autoref{Lem:mollify-est} and (\ref{usetocite}) we have
    \begin{align}
		\begin{split}\label{eq:krok-Ck-mollified-roznica}
			|\tilde{v} - v_q |_j + |\tilde{w} - w_q|_j &\leq C\left( |v_q|_2 + |w_q|_2 \right) l^{2-j}, \quad \text{ for } j = 0,1,
			\\
			|\tilde{v}|_{2+j} + |\tilde{w}|_{2+j} &\leq CK \delta_q^{1/2} \lambda_q l^{-j}, \quad \text{ for } j \geq 0.
		\end{split}
    \end{align}
	The same Lemma together with~\eqref{eq:deficit-Holder-est} also imply
	\begin{align}
		\| \tilde{D} - \delta_{q+1}\Id \|_\alpha &\leq \| D_q * \phi_l \|_\alpha + 
		\frac{1}{2} \left\| \left( \del v_q \otimes \bar{\del} v_q \right) * \phi_l - \del \tilde{v}_q \otimes \bar{\del} \tilde{v}_q \right\|_\alpha 
		+ \delta_{q+2} \nonumber 
		\\ 
		&\leq \| D_q \|_\alpha + C l^{2-\alpha} \| v_q \|^2_2 + \delta_{q+2} \nonumber
		\\
		&\leq 2 \sigma \delta_{q+1} + C \nu^2 l^{2 - \alpha} \delta_{q+1}, \label{eq:mollified-deficit-est-Holder}
	\end{align}
	provided we have
	\begin{equation}\label{eq:delta-krok-est}
		\delta_{q+2} \leq \sigma \delta_{q+1}.
	\end{equation}
	
	Let us fix now mollification scale $l$ as
	\begin{equation}\label{eq:skala-l-def}
		l^{2 - \alpha} := \frac{\sigma}{C K^2} \delta_{q+1} \delta_q^{-1} \lambda_q ^{-2}.
	\end{equation}
	That way~\eqref{eq:nu-def},~\eqref{eq:mollified-deficit-est-Holder},~\eqref{eq:delta-krok-est} and~\eqref{eq:skala-l-def} give the following estimate
	\[
	\| \tilde{D} - \delta_{q+1}\Id \|_{\alpha} \leq 3 \sigma \delta_{q+1}
	\]
	and recalling the mollification estimates \eqref{uselateron} and \eqref{eq:est-mollified-Holder} we also get 
	\begin{equation}\label{eq:deficit-k-Holder-est}
		| \tilde{D} - \delta_{q+1} \Id |_{k;\alpha} \leq C_k \sigma \delta_{q+1} l^{-k}.
	\end{equation}
	Provided we choose $\sigma < \tilde{\sigma}/3$, where $\tilde{\sigma}$ is the constant appearing in Proposition~\autoref{Prop:diagonalizacja} we may apply this proposition to $\tilde{D}/ \delta_{q+1}$ to get an element $\hat{\kerk} \in \ker_w\left( \curl\overline{\curl} \right)$ and functions $ \hat{d}_1, \ldots, \hat{d}_n $ such that
	\begin{equation}\label{eq:kernel-trick}
		\tilde{D} = \delta_{q+1} \hat{\kerk} + \delta_{q+1} \mathrm{diag} \left( \hat{d}^2_1,\ldots, \hat{d}^2_n \right).
	\end{equation}
	Recall that by~\eqref{eq:diago-est} and~\eqref{eq:deficit-k-Holder-est} we have
	\begin{align}
	    \begin{split}\label{eq:diago-delta-est}
			\sum_j \delta_{q+1} \| \hat{d}_j - 1 \|_\alpha + \delta_{q+1} \| \hat{\kerk} \|_\alpha &\leq C \sigma \delta_{q+1},
			\\
			\sum_j \delta_{q+1} \| \hat{d}_j \|_{k ; \alpha} + \delta_{q+1} \| \hat{\kerk} \|_{k ; \alpha} \leq C \| \tilde{D} - \delta_{q+1}\Id \|_{k ; \alpha} &\leq C \sigma \delta_{q+1} l^{-k}.
       \end{split}
	\end{align}
	At this point we are able to define the claimed element $\kerk_{q+1}$ of the weak kernel as
	\begin{equation}
		\kerk_{q+1} := \delta_{q+1} \hat{\kerk}.
	\end{equation}
	and amplitudes as
	\[
	d_j := \delta_{q+1}^{1/2} \hat{d}_j,
	\]
	which leads us to
	\begin{align}
		\| d_j \|_0 \leq (C \sigma+1) \delta^{1/2}_{q+1}, &\quad | d_j |_k \leq C \delta^{1/2}_{q+1} l^{-k}, \label{eq:dj-Ck-est}
		\\
		\| \kerk_{q+1} \|_0 \leq  C \sigma \delta_{q+1}, &\quad | \kerk_{q+1} |_1 \leq C \delta_{q+1} l^{-1}. \label{eq:kerk-Ck-est}
	\end{align}
	
	The final perturbation scheme is the following. Let us put $\hat{v}_0 = \tilde{v}$ and $\hat{w}_0 = \tilde{w}$. Then
	\begin{align*}
		\hat{v}_j =& \hat{v}_{j-1} + \frac{1}{\mu_j} \Gamma_1 \left( d_j(z), \mu_j \left( z + \bar{z} \right) \cdot e_j \right), \\
		\hat{w}_j =& \hat{w}_{j-1} - \frac{1}{\mu_j} \Gamma_1 \left( d_j(z), \mu_j ( z + \bar{z} ) \cdot e_j \right) \bar{\del} \hat{v}_{j-1} + \frac{1}{\mu_j} \Gamma_2 \left( d_j(z), \mu_j ( z + \bar{z} ) \cdot e_j \right) e_j,
	\end{align*}
	for $j = 1, \ldots, n$. Here $e_j$ are the vectors of the canonical basis of $\C^n$ and $\{ \mu_j \}$ is an increasing sequence of large constants to be fixed later on, with the exception of $\mu_n$, which we set as
	\[
	\mu_n := \lambda_{q+1}.
	\]
	We also introduce additional constant $\mu_0$ and assume $l^{-1} \leq \mu_0 \leq \mu_1 \leq \ldots \leq \mu_n$. Exploiting~\eqref{eq:perturbacje-est} we get the estimates
	\begin{align*}
		\| \hat{v}_{j+1} - \hat{v}_{j} \|_0 &\leq C \frac{\| d_j \|_0}{\mu_{j+1}}, \\
		| \hat{v}_{j+1} - \hat{v}_{j} |_1 &\leq C \left( \frac{ | d_{j+1} |_1 }{ \mu_{j+1} } + | d_{j+1}|_0 \right), \\
		| \hat{v}_{j+1} - \hat{v}_{j} |_2 &\leq C \left( \frac{| d_{j+1} |_2}{ \mu_{j+1} } + | d_{j+1} |_1 + \mu_{j+1}| d_{j+1} |_0 \right), \\
		| \hat{v}_{j+1} - \hat{v}_{j} |_3 &\leq C \left( \frac{ |d_{j+1}|_3 }{\mu_{j+1}} + |d_{j+1}|_2 + \mu_{j+1}|d_{j+1}|_1 +  \mu_{j+1}^2|d_{j+1}|_0\right).
	\end{align*}
	Putting~\eqref{eq:dj-Ck-est} into the above we get that
	\begin{align}\label{eq:vj-Ck-krok-roznica}
     | \hat{v}_{j+1} - \hat{v}_j |_k \leq C(\sigma) \delta^{1/2}_{q+1} \mu_{j+1}^{k-1},\ k\in\lbrace0,1,2,3\rbrace.
	\end{align}
	Moreover, from~\eqref{eq:C1-est-zalozenie},~\eqref{eq:est-C2-inaczej}, ~\eqref{eq:krok-Ck-mollified-roznica} and the above we get the following estimates for $| \hat{v}_j |_k$ with $k \in \{1,2,3 \}$:
	\begin{align}
		\begin{split}
			| \hat{v}_j |_1 \leq | \hat{v}_0 |_1 + C \delta^{1/2}_{q+1} &\leq C K,
			\\
			| \hat{v}_j |_k \leq | \hat{v}_0 |_k + C \delta^{1/2}_{q+1} \mu_{j}^{k-1} &\leq C \delta^{1/2}_{q+1} \mu_{j}^{k-1}, \quad k = 2,3 \label{eq:v_j-Ck-est}
		\end{split}
	\end{align}
    provided
	\begin{equation}\label{eq:est-oczywisty}
		\lambda_q \leq \mu_0 \leq \mu_j \ \ \text{ and } \ \ \nu \leq l^{-1}
	\end{equation}
	The second assumption is always satisfied provied the base $a$ in~\eqref{eq:stale-do-iteracji} is big enough.
	
	Similarly, for the maps $\hat{w}_j$ we get the estimates
	\begin{align*}
		\| \hat{w}_{i+1} - \hat{w}_{i} \|_0 &\leq \frac{C}{\mu_{i+1}}, \\
		| \hat{w}_{i+1} - \hat{w}_{i} |_1 &\leq C \left( \frac{ | d_{i+1} |_1 | \hat{v}_{i} |_1}{ \mu_{i+1} } +
		\frac{ | d_{i+1} |_0 |\hat{v}_{i} |_2 }{ \mu_{i+1} } +
		| d_{i+1} |_0| \hat{v}_{i} |_1 + 
		\frac{ |d_{i+1} |_1 | d_{i+1} |_0 }{ \mu_{i+1} } + 
		| d_{i+1} |^2_0 \right), \\
		| \hat{w}_{i+1} - \hat{w}_{i} |_2 &\leq C \left( \frac{1}{ \mu_{i+1} } \left( { |d_{i+1}|_2 | \hat{v}_{i}|_1}  +
		{ | d_{i+1} |_1 | \hat{v}_{i} |_2 } +
		{ |d_{i+1}|_0 |\hat{v}_{i}|_3 } +
		{ |d_{i+1}|_2 |d_{i+1}|_0 } + 
		{ |d_{i+1}|^2_1 } \right) \right.\\
		&\phantom{ C \ ( \frac{1}{ \mu_{i+1} } ( \ } +| d_{i+1} |_1 | \hat{v}_{i} |_1 + 
		| d_{i+1} |_0 | \hat{v}_{i} |_2 +
		| d_{i+1} |_0 | d_{i+1} |_1 \\
		&\phantom{ C \ ( \frac{1}{ \mu_{i+1} } ( \ } \left. +\mu_{i+1} \left( | d_{i+1} |^2_0 + | d_{i+1} |_0 | \hat{v}_{i} |_1 \right) \right). 
	\end{align*}
(keep in mind that $\Gamma_1$ is linear in the first variable).

	Again, we can plug into it~\eqref{eq:v_j-Ck-est} and~\eqref{eq:dj-Ck-est} to get
	\begin{align}
		\begin{split}\label{eq:w_j-Ck-est}
			| \hat{w}_{j+1} - \hat{w}_j |_1 &\leq C \left( \delta_{q+1} + \delta_{q+1}^{1/2} \right);
			\\
			| \hat{w}_{j+1} - \hat{w}_j |_2 &\leq C \left( \mu_{j+1} \delta_{q+1}^{1/2} \right).
		\end{split}
	\end{align}
	
	At this point we can notice that~\eqref{eq:vj-Ck-krok-roznica},~\eqref{eq:w_j-Ck-est} and~\eqref{eq:kerk-Ck-est} imply~\eqref{eq:stage-krok-C1-est} and~\eqref{eq:stage-krok-C2-est} for $\hat{v}_n, \hat{w}_n, \kerk_{q+1}$ provided the constant $K$ from the claim is taken big enough.
	
	Finally, we may estimate the new deficit tensor. At each step we get a perturbation error defined as
	\begin{align*}
		\mathcal{E}_j := &\frac{1}{2} \del v_{j} \otimes \bar{\del} v_{j} + \herm \del w_j - \frac{1}{2} \del v_{j-1} \otimes \bar{\del} v_{j-1} - \herm \del w_{j-1} - d_j^2 \left( e_j \otimes {e}_j \right)= 
		\\
		- &\frac{1}{ \mu_j } \Gamma_1 \del \bar{\del} \hat{v}_{j-1} + \frac{1}{ \mu_j } \left( \del_s \Gamma_2 + \del_s \Gamma_1 \del_t \Gamma_1 \right) \herm \left( \del d_j \otimes e_j \right)
		+ \frac{1}{ 2 \mu^2_j } | \del_s \Gamma_1 |^2 \del d_j \otimes \bar{\del} d_j.
	\end{align*}
	Using~\eqref{eq:perturbacje-est},~\eqref{eq:dj-Ck-est} and~\eqref{eq:v_j-Ck-est} we estimate this expression by
	\begin{align}
		\begin{split}
			\| \mathcal{E}_j \|_0 \leq C &\left( \frac{1}{\mu_j} \| \Gamma_1 \|_0 | \hat{v}_{j-1} |_2 + \frac{1}{ \mu_j } \left( \| \del_s \Gamma_2 \|_0 + \| \del_s \Gamma_2 \del_t \Gamma_1 \|_0 \right) | \del d_j |_0 \right. \\
			& \left. \phantom{((}+ \frac{1}{ \mu_j^2  } \| \del_s \Gamma_1 \|^2_0 | \del d_j |^2 \right)
		\end{split} \nonumber
		\\
		\leq \ & \delta_{q+1} \frac{\mu_{j-1}}{\mu_j}. \label{eq:perturbation-C0-est}
	\end{align}
	For the interpolation inequalities we also need the $C^1$-seminorm estimate of the deficits, thus (recall once again that $\Gamma_1$ is linear in $s$)
	\begin{align}
		| \mathcal{E}_j |_1 &\leq \frac{1}{\mu_j} \left( \|\del_s \Gamma_1 \|_0 |\hat{v}_{j-1}|_2 |d|_1 + \mu_j \| \del_t \Gamma_1 \|_0 | \hat{v}_{j-1} |_2 + \|\Gamma_1\|_0 |\hat{v}_{j-1}|_3 \right) \nonumber
        \\ 
		&+ \frac{|d|_1}{\mu_j} \left( |d|_1 \|\del_s \Gamma_1 \del^2_{st} \Gamma_1 \|_0 + \mu_j \left( \|\del_{st}^2 \Gamma_2 \|_0 + \|\del^2_{st} \Gamma_1 \del_t \Gamma_1 \|_0 + \|\del_s \Gamma_1 \del^2_{tt} \Gamma_1 \|_0 \right) \right) \nonumber
        \\
        &+ \frac{|d|_2}{\mu_j} \left( \|\del_s \Gamma_2 \|_0 + \|\del_s \Gamma_1 \del_t \Gamma_1 \|_0 + \mu^{-1}_j\|d_j\|_1 \|\del_s \Gamma_1 \|_0^2 \right) + \frac{|d_j|_1^2}{\mu_j} \nonumber
        \\
		& \leq \frac{C \delta_{q+1}}{\mu_j} \left( \mu_{j-1} \left( l^{-1} + \mu_j + \mu_{j-1} \right) + l^{-2} + \mu_j + l^{-1} \right) \nonumber
		\\
		&\leq C \delta_{q+1} \mu_{j-1} \label{eq:perturbation-C1-est}
	\end{align}
	and the last estimate holds since $l^{-1} \leq \mu_j$.
	
	Since the new deficit tensor is defined as
	\[
	D_{q+1} = A \cchermm{v_{q+1}}{w_{q+1}} - \kerk_{q+1} - \delta_{q+2} \Id,
	\]
	thanks to~\eqref{eq:kernel-trick} we arrive at the estimate
	\[
		\| D_{q+1} \|_\alpha \leq \| A - \tilde{A} \|_\alpha + \sum_{j=1}^n \| \mathcal{E}_j \|_\alpha .
	\]
	We will get~\eqref{eq:stage-blad-Holder-est} by estimating each H\"{o}lder norm in the expression above. Thus using the H\"{o}lder interpolation with~\eqref{eq:perturbation-C0-est} and~\eqref{eq:perturbation-C1-est} we get
	\[
		\| \mathcal E_j \|_\alpha \leq C \| \mathcal E_j \|_0^{1 - \alpha} | \mathcal E_j |_1^{\alpha} \leq C \delta_{q+1} \frac{\mu_{j-1}}{\mu_j^{1-\alpha}}
	\]
	and
	\[
	\| A - \tilde{A} \|_\alpha \leq C l^{\kappa - \alpha} \| A \|_\kappa,
	\]
	which brings the H\"{o}lder norm estimate of $D_{q+1}$ to
	\[
	\| D_{q+1} \|_\alpha \leq C \left( l^{\kappa - \alpha} \| A \|_\kappa + \delta_{q+1} \sum_j \frac{ \mu_{ j -1 } }{ \mu_j^{1 - \alpha} } \right)
	\]
	thus to get the claimed estimate it suffices that
	\begin{equation}\label{eq:stale-nierownosci-3}
		l^{\kappa - \alpha} \leq \frac{\sigma \delta_{q+2}}{C (n+1)} 
		\quad \text{ and } \quad 
		\frac{ \mu_{j-1} }{ \mu_j^{1- \alpha} } \leq \frac{\sigma \delta_{q+2}}{ (n+1) C \delta_{q+1}}.
	\end{equation}
	Thus assuming~\eqref{eq:stale-nierownosci-3} we obtain
	\[
	\| D_{q+1} \|_\alpha \leq \sigma \delta_{q+2},
	\]
	which concludes the proof of the result.
\end{proof}

{Step 3. {\bf Conclusion}.} In order to finish the proof of the main theorem we must show that $v_q$ and $w_q$ converge in an appropriate $C^{1,\beta}$ norm, $\kerk_q$ converges in $C^\beta$ and $D_q \rightarrow 0$ in $C^0 \left( \Omega, \C^{n \times n}_\herm \right)$.

For the convergence of $v_q$ and $w_q$ we use the interpolation inequalities of H\"{o}lder norms. Thus from~\eqref{eq:stage-krok-C1-est} and~\eqref{eq:stage-krok-C2-est} we have
\begin{equation}
	\| v_{q+1} - v_q \|_{1;\beta} \leq \| v_{q+1} - v_q \|_{1}^{1 - \beta} \| v_{q+1} - v_q \|_{2}^{\beta} \leq K \delta_{q+1}^{1/2} \lambda^\beta_{q+1},
\end{equation}
which also works for $w_q$. Thus 
\[
\| v_{q+1} - v_q \|_{1;\beta} \leq K a^{b^q \left( bc\beta - \frac{1}{2} \right)}
\]
and it becomes a Cauchy sequence as long as
\begin{equation}\label{eq:holder-wykl-1}
	\beta < \frac{1}{2bc}.
\end{equation}

We must also justify inequalities~\eqref{eq:stale-nierownosci-3}. At this point me may fix the constants $\mu_j$. Let us put
\begin{equation}\label{eq:mu_0-definicja}
	\mu_0 := l^{-1}
\end{equation}
and suppose for each $j = 1,\ldots, n-1$ we have
\begin{equation}
	\frac{ \mu_{j-1} }{ \mu_j^{1-\alpha}} = \frac{ \sigma \delta_{q+2} }{ C' \delta_{q+1}}
\end{equation}
for some constant $C' \geq (n+1)C$ with $C$ taken from~\eqref{eq:stale-nierownosci-3}. Let us note that it will give an increasing sequence of constants for base $a$ big enough. Moreover, then by the defintion of $l$, the condition~\eqref{eq:est-oczywisty} is easily satisfied. Therefore for $\lambda_{q+1}$ (= $\mu_n$) we have the following condition
\begin{equation}
	\lambda_{q+1} \geq \mu_0^{(1-\alpha)^{-n}} \delta_{q+1}^{ \alpha^{-1} \left( (1-\alpha)^{-n} - 1)\right)} \delta_{q+2}^{ \alpha^{-1} \left( 1 - (1-\alpha)^{-n} \right)} C( \sigma, \alpha).
\end{equation}
After plugging in~\eqref{eq:mu_0-definicja} and taking $a$ big enough so that we may ignore all the fixed constants we arrive at the following inequalities
\begin{align}
	cb \left[ b(2-\alpha)(1 - \alpha)^{n} - 2 \right] &\geq (b-1)\left[ 1 + b(2 - \alpha) \left( \frac{1 - (1 - \alpha)^{n}}{ \alpha } \right) \right] \label{eq:stale-nierownosc-1},\\
	\kappa \left( 2bc + b - 1 \right) &\geq b^2(2-\alpha) + \alpha (2bc + b - 1) \label{eq:stale-nierownosc-2}.
\end{align}
Thus for $b$ we get 
\[
b > \frac{2}{ (2 - \alpha) (1 - \alpha)^n } \xrightarrow[\alpha \rightarrow 0]{} 1,
\]
which brings the following estimate for the constant $c$ as $\alpha \rightarrow 0$
\begin{equation}
	c \geq \frac{ (b-1)(1 + 2nb) }{ 2b (b-1)} \rightarrow n + \frac{1}{2}\ \ {\rm as}\ \ b\rightarrow 1^+.
\end{equation}
That way~\eqref{eq:holder-wykl-1} is satisfied whenever
\begin{equation}\label{eq:holder-wykl-2}
	\beta < \frac{1}{1+ 2n}
\end{equation}
for $b$ arbitrarily close to $1$.

Similar reasoning for $\kappa$ gives us estimate
\[
 \kappa > \frac{2}{1 + 2n}.
\]
Recall that the integrability exponent $p$ of the right-hand side of the equation is related to $\kappa$ by identity $\kappa = 2 - \frac{2n}{p}$ which brings the limit value for $p$ at
\[
 p > n + \frac{1}{2}.
\]

There is also a question of convergence of the kernel elements $\kerk_q$ in $C^\beta$. However, this follows from the conditions that are already imposed, namely let $r \geq q$ then by H\"{o}lder interpolation and~\eqref{eq:kerk-Ck-est}
\[
\| \kerk_q - \kerk_r \|_\beta \leq 2 \| \kerk_q \|_\beta \leq C \delta_{q+1} l^{-\beta}.
\]
Therefore it converges whenever $v_q$ and $w_q$ do.

As a result $v=lim_{q\rightarrow\infty}v_q$ exists and is a solution to~\eqref{eq:MA-vw} because the deficit tensor vanishes in the limit (see Proposition ~\ref{Prop:Stage}).

The only thing left to do is make sure that the solution is close enough to the chosen function $u^\flat$. Recall that $v_0 = u^\flat$ was the starting point of the iteration scheme. By~\eqref{eq:stage-krok-C1-est} we have
\begin{align}
	\| u^\flat - v \|_0 \leq \| u^\flat - v_1 \|_0 + \sum_{q = 1}^{\infty} \| v_{q} - v_{q+1} \|_0 \leq \sum_{q=0}^{\infty} K \delta_{q+1}^{1/2}.
\end{align}
For a fixed $p$ and $b$ one can always take $a$ big enough so that the second term above is smaller than any pregiven $\varepsilon$.

\section{Real 2-Hessian}

Let us mention briefly that a modification of Proposition \ref{Prop:diagonalizacja} can be applied to the real 2-Hessian equation yielding very similar estimates.

\paragraph{{\bf Real kernel}.} For the real symmetric matrix field $A$ defined on a simply-connected domain $\Omega$ the kernel equation for $\tr( \curl_L \curl_R)$ is almost the same. Namely the coefficient of the field $A$ must satisfy
\[
\sum_j \Delta_j a_{jj} = 2 \sum_{i < j} \del_{ij}^2 a_{ij}.
\]

\begin{Prop}\label{Prop:diagonalizacja-R}
	For any $j \in \N , \ 0 < \alpha < 1$ there exist $M_1, M_2,\ldots$ and $\sigma_1$ depending on $j, \alpha$ and $n$ such that the following holds. If $S \in C^{j,\alpha} \left( \Omega, \R^{n \times n}_{\mathrm{sym}} \right)$ satisfies
	\[
	\| S - \Id \|_\alpha \leq \sigma_1,
	\]
	then there exist $\kerk \in C^{j+1, \alpha} \left( \Omega, \R^{n \times n}_{\mathrm{sym}} \right) \, \cap \, \ker_w \left( \mathrm{curl} \, \mathrm{curl} \right)$ and $d_1,\ldots,d_n \in C^{j, \alpha} \left( \Omega, \R \right)$ such that
	\begin{equation}\label{eq:diag-potencjal-R}
		S + \kerk = diag(d^2_1,\ldots,d^2_n)
	\end{equation}
	and the following estimates hold:
	\begin{align}
		\begin{split}\label{eq:diago-est-R}
			\| d - 1 \|_\alpha + \| \kerk \|_\alpha &\leq M_1 \| S - \Id \|_\alpha; \\
			\| d \|_{j ; \alpha} + \| \kerk \|_{j ; \alpha} &\leq M_j \| S - \Id \|_{j ; \alpha}.
		\end{split}
	\end{align}
\end{Prop}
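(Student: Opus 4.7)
The strategy is to run the complex construction in reverse: find a representation of the weak kernel via scalar potentials solving Poisson-type equations, then kill the off-diagonal entries of $S$ by choosing the potentials to solve an inhomogeneous Laplace equation.

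First, I would write the weak form of the real kernel equation
$\sum_j \Delta_j \kerk^{jj} = 2 \sum_{i<j} \del^2_{ij} \kerk^{ij}$
by testing against $\varphi \in C^\infty_c(\Omega)$ and integrating by parts twice, in complete analogy with~\eqref{eq:cx-Hessian-slabe-jadro}. As in the complex case, the resulting integrand has the shape $\int d\varphi \wedge dF$ for an $(n-1)$-form $F$, and I would look for $F = *G$ with $G$ a symmetric $2$-tensor built from scalar potentials $\psi^{ij} \in C^\infty(\Omega)$, $\psi^{ij}=\psi^{ji}$, with $\psi^{ij}|_{\del\Omega}=0$. Tracking the index bookkeeping exactly as in~\eqref{eq:potencjaly-2-formy}--\eqref{eq:dF=d*G} (with $\del_{z_k}$ and $\del_{\bar{z}_j}$ replaced by real partials), the cancellations forced by the real symmetry lead to an ansatz of the form
\[
\kerk^{ij} \;=\; 2\Delta \psi^{ij} \quad (i\neq j), \qquad \kerk^{ii} \;=\; 2\sum_{k\neq l} \del^2_{kl}\psi^{kl},
\]
for which the real kernel equation is automatically satisfied.

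Next I would define the off-diagonal potentials by solving the zero-boundary Dirichlet problem
\[
2\Delta \psi^{ij} = -s^{ij} \quad\text{in }\Omega, \qquad \psi^{ij}|_{\del\Omega} = 0, \qquad i\neq j,
\]
where $s^{ij}$ are the off-diagonal entries of $S$. Then $(S+\kerk)^{ij} = s^{ij} + 2\Delta\psi^{ij} = 0$ for $i \neq j$, so $S+\kerk$ is diagonal. The Schauder estimates $\|\psi^{ij}\|_{j+2;\alpha} \leq C \|s^{ij}\|_{j;\alpha}$ transfer directly into the stated bounds~\eqref{eq:diago-est-R} on $\|\kerk\|_{j;\alpha}$: off-diagonal entries are controlled by $\|S-\Id\|_\alpha$ since the off-diagonals of $S$ coincide with those of $S-\Id$, while the diagonal entries $\kerk^{ii}$ are sums of second derivatives of the $\psi^{kl}$'s and hence again controlled in $C^{j,\alpha}$ by $\|S-\Id\|_{j;\alpha}$.

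Finally, choosing $\sigma_1$ small enough so that $M_1 \sigma_1 < 1/2$, we have $\|\kerk\|_\alpha \leq \tfrac12$, so the diagonal matrix $S+\kerk$ has entries in a neighborhood of $1$ and in particular is positive. Set $d_k := \sqrt{(S+\kerk)^{kk}}$; since $t\mapsto\sqrt{t}$ is smooth on a neighborhood of $1$, the H\"older estimates in~\eqref{eq:diago-est-R} follow by composition. The main obstacle, as in the complex case, is the combinatorial step of identifying the correct diagonal formula for $\kerk^{ii}$ in terms of $\psi^{kl}$, i.e.\ designing the ansatz so that the kernel identity holds pointwise rather than only after the Poisson correction; symmetry of $S$ makes this somewhat cleaner than the Hermitian computation in the proof of Proposition~\ref{Prop:diagonalizacja}, but the bookkeeping of the mixed-index terms has to be carried out carefully.
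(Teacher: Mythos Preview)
Your overall strategy coincides with the paper's, and the concluding steps (solve the zero–boundary Dirichlet problem for the off–diagonal potentials, invoke Schauder estimates, then take square roots on the diagonal) are fine. The gap is in the ansatz itself: the diagonal formula
\[
\kerk^{ii}=2\sum_{k\neq l}\del^2_{kl}\psi^{kl}
\]
is the complex formula transported verbatim, and it does \emph{not} produce an element of $\ker_w(\curl\,\curl)$ when $n\geq 3$. Since all the $\kerk^{ii}$ you propose equal the same function, $\sum_i\Delta_i\kerk^{ii}=(n-1)\Delta\,\kerk^{11}$, and a direct check of the kernel identity $\sum_i\Delta_i\kerk^{ii}=2\sum_{i<j}\del^2_{ij}\kerk^{ij}$ with $\kerk^{ij}=2\Delta\psi^{ij}$ leaves the defect
\[
\tr\,\curl\,\curl\,\kerk \;=\; 4(n-2)\sum_{k<l}\del^2_{kl}\,\Delta\psi^{kl},
\]
which is generically nonzero once $n\geq 3$. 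The correct diagonal coefficient must carry a factor $\tfrac{1}{n-1}$.

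The paper flags exactly this point (``because the Hodge star behaves differently for real and complex spaces the final kernel field will have a different form'') and, rather than porting the complex ansatz, introduces an intermediate $1$-form $H$ and sets $F=*dH$; note $F$ is an $(n-2)$-form here, not an $(n-1)$-form as you wrote. Comparing coefficients in $d*dH$ with those in the weak kernel identity yields $\sum_{j\neq i}(\del_j\kerk^{ij}-\del_i\kerk^{jj})=\sum_{j\neq i}(\del^2_{jj}h^i-\del^2_{ij}h^j)$, and after substituting $h^i=\sum_{j\neq i}\del_j\psi^{ij}$ one reads off $\kerk^{ij}=\Delta\psi^{ij}$ for $i\neq j$ together with
\[
\kerk^{11}=\cdots=\kerk^{nn}=-\frac{2}{\,n-1\,}\sum_{i<j}\del^2_{ij}\psi^{ij}.
\]
The $(n-1)^{-1}$ factor is the missing ingredient; once it is in place the remainder of your argument goes through without change.
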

\begin{proof}
	We proceed in the same manner as with the complex conterpart. However, because the Hodge star behaves differently for real and complex spaces the final kernel field will have a different form. Let us denote by $s^{ij}$ the entries of $S$ and by $\kerk^{ij}$ the coefficients of the matrix field $\kerk$ from the kernel. Let us write down down the weak form of the kernel equation. Thus for any test function $\varphi$ we must have
	\begin{align*}
		0 &= \int_\Omega \varphi \, \mathrm{curl curl}\kerk \\
		&= \int_\Omega \varphi \left( \sum_{i} \Delta_i \kerk^{ii} - 2 \sum_{i < j} \del^2_{ij} \kerk^{ij} \right) \\
		&= - \int_\Omega \sum_i \del_i \varphi \left( \sum_{j \neq i } \del_i \kerk^{jj}- \del_j \kerk^{ij} \right).
	\end{align*}
	This will vanish if we can find a $(n-2)$-form $F$ such that the last integral can be expressed as
	\[
	\int_\Omega d\varphi \wedge d F.
	\]
	Let us denote by $dx_I^i$ the $(n-1)$-form $dx_1 \wedge \ldots dx_{i-1} \wedge dx_{i+1} \wedge \ldots \wedge dx_n$. We define the $(n-2)$-form $dx_I^{ij}$ in a similar way for $i<j$. Thus if we write down the form $dF$ as $\sum_i F_i\, dx_I^i$ we get
	\[
	F_i = (-1)^{i-1} \sum_{j \neq i} (\del_i \kerk^{jj} - \del_j \kerk^{ij})
	\]
	Let us further assume that $F = *G$ for some 2-form $G$ and $G = dH$ for a 1-form $H$. Let us denote the coefficients of $H$ by $h^i$, i.e. $H = \sum_i h^idx_i$. Calculating the dependence of $F$ on $H$ we arrive at
	\begin{align*}
		d*dH =& \\
		=&  d* \sum_{i < j} (\del_i h^j - \del_j h^i) dx_i \wedge dx_j \\
		=& d \sum_{i < j} (-1)^{i + j + 1}( \del_i h^j - \del_j h^i) dx_I^{ij} \\
		=& \sum_{i < j} (-1)^j( \del^2_{ii} h^j - \del^2_{ji} h^{i} )dx_I^j + (-1)^{i-1} (\del^2_{ij} h^{j} - \del^2_{jj} h^i) dx^i_I \\
		=& \sum_i (-1)^i \sum_{j \neq i} (\del^2_{jj} h^i - \del^2_{ji} h^j) dx_I^i.
	\end{align*}
	If we compare the result with the coefficients of $F_i$, we get
	\begin{equation}\label{eq:potencjaly-jadro}
		\sum_{j \neq i} \del_j \kerk^{ij} - \del_i \kerk^{jj} = \sum_{j \neq i} \del^2_{jj} h^i - \del^2_{ij} h^j.
	\end{equation}
	Suppose there are functions $\psi^{ij}$ for any $1 \leq i < j \leq n$ that satisfy
	\[
	h^i = \sum_{j < i} \del_j \psi^{ji} + \sum_{j > i} \del_j \psi^{ij}.
	\]
	If we plug this into~\eqref{eq:potencjaly-jadro} and compare the left and the right-hand side of we get 
	\begin{align*}
		&\sum_{j \neq i} \del_j \kerk^{ij} - \del_i \kerk^{jj} = \sum_{j \neq i} \left( \sum_{k < i} \del^3_{kjj}\psi^{ki} + \sum_{k > i} \del^3_{kjj} \psi^{ik} - \sum_{l < j} \del^3_{lij} \psi^{lj} - \sum_{l > j} 
		\del^3_{lij} \psi^{jl} \right)  \\
	&	= \sum_{j < i} \Delta \del_j \psi^{ji} + \sum_{j > i} \Delta \del_j \psi^{ij}  - 2\sum_{j < i} \del^3_{iij} \psi^{ij}-2\sum_{j > i} \del^3_{iij} \psi^{ji} - 2\sum_{j \neq i}\sum_{l\neq i,l < j}
		\del^3_{ilj} \psi^{lj}\\
		&- 2\sum_{j \neq i}\sum_{l\neq i,l > j}
		\del^3_{ilj} \psi^{jl}.
	\end{align*}
	If we now make a following ansatz that each $\psi_{ij}$ solves $\kerk^{ij} = \Delta \psi^{ij}$ the formula will hold provided we put
	\begin{equation}\label{diagterms}
		\kerk^{11} = \kerk^{22} = \ldots = \kerk^{nn} = -\frac{2}{n-1}\sum_{i<j} \del_{ij}^2\psi^{ij}.
	\end{equation}
	
	Going back to the field $S$ we solve $ \Delta \psi^{ij}= -s^{ij}$ for $i < j$ (with zero boundary) and then the elements on the diagonal by the formula (\ref{diagterms}). The claim follows again from the Schauder estimates applied to functions $\psi^{ij}$.
\end{proof}

\bibliographystyle{plain}
\bibliography{bibliografia_cvx_int}

\end{document}

\section{Introduction}

Let $\Omega$ be a simply-connected domain in $\R^2$. In~\cite{Iwan01} Iwaniec defined the very weak version of \Monge operator on $W_{loc}^{1,2}(\Omega)$ as
\[
MA(v) = \del^2_{xy}(\del_x v \del_y v) - \frac{1}{2}\del^2_{yy} (\del_x v)^2 - \frac{1}{2} \del^2_{xx} (\del_y v)^2.
\]
which can be interpreted as an operator on matrix fields
\[
MA(v) = -\frac{1}{2} \curl \curl (\nabla v \otimes \nabla v)
\]
and the first curl can taken either row-wise or column-wise since the matrix is symmetric. The definition can be extended in two directions, first we may define the 2-Hessian operator in $\R^n$ and then we can extend it to the 2-Hessian in $\C^n$. Let us recall that the 2-Hessian operator usually denoted by $S_2(u)$ is defined as 
\begin{equation}\label{eq:2-Hess-klas}
	S_2(u) = \sigma_2 \left( D^2u \right) = \sum_{i < j} \del^2_{ii} u \, \del^2_{jj} u - (\del^2_{ij} u)^2.
\end{equation}
Here $\sigma_2 \left( D^2 u \right) $ stands for second elementary symmetric polynomial of the eigenvalues of $D^2 u$.

Before we define the 2-Hessian in $\R^n$ we must take a closer look at what it would mean to define a row-wise or column-wise curl of a matrix field. For this problem  a language of differential forms seems appropriate. A matrix field can be looked at as a section of a tensor bundle $\left( \bigwedge^1 \R^n \otimes \bigwedge^1 \R^n \right) (\Omega)$. In that bundle we define a left-curl as
\[
\curl_L (f\, dx \otimes dy) := *d(fdx) \otimes dy.
\]
Similarly we define the right-curl ($\curl_R$). Therefore the result of $\curl_R \curl_L M$ for some matrix field $M$ will be a tensor field in $\left( \bigwedge^{n-2} \R^n \otimes \bigwedge^{n-2} \R^n \right) (\Omega)$. Let us notice, that the definition is consistent with usual curls in 2 and 3 dimensions. Finally, since the outcome should be a scalar function we need to introduce an appropriate trace operator, which in the language of differential forms comes naturally. Let $ dx_I, dx_J$ be some elements of the canonical basis of $\bigwedge^{k} \R^n$ then we define
\[
\mathrm{tr} \left( f dx_I \otimes dx_J \right) := * \left( f dx_I \wedge *dx_J \right)
\]
Let us notice that for the matrix fields the result of this operation will be indeed their trace. Now we are able to introduce the very weak form of the 2-Hessian operator:

\begin{Def}
	Let $\Omega$ be a simply-connected domain in $\R^n$ and let $v \in W^{1,2}_{loc}(\Omega)$. We say that $v$ is a very weak solution to the 2-Hessian equation
	\[
	S_2(u) = f
	\]
	if $u$ solves the equation
	\[
	-\frac{1}{2} \tr \left( \curl_L\curl_R \left( d u \otimes d u \right) \right) = f
	\]
	in the weak sense.
	
\end{Def}

\begin{Rem}
	The reader can check that if instead of integrating by parts we differentiate the coefficients of the tensor field we will indeed end up with the operator~\eqref{eq:2-Hess-klas}.
\end{Rem}

\paragraph{Complex 2-Hessian.} To define the complex 2-Hessian we must recall first the usual differental operators in the complex variables. Let us say that $z = x + iy$ then we put
\[
\del_z = \del_x - i\del_y, \quad \del_{\bar{z}} = \bar{\del}_z = \del_x + i \del_y.
\]
Similarly we define the differential forms
\[
dz = dx + idy, \quad d\bar{z} = dx - idy.
\]

Now we are in a position to define the complex 2-Hessian by putting
\[
S_2^\C(u) = \sigma_2 \left( \left[ \frac{\del^2 u}{\del z_i \del\bar{z}_j} \right]_{i,j} \right) = \sum_{i < j} \del^2_{z_i \bar{z}_i} u \, \del^2_{z_j \bar{z}_j} u - \del^2_{z_i \bar{z}_j} u \, \del^2_{z_j \bar{z}_i} u.
\]
To finally define the very weak formualtion we introduce two more standrd operators in the complex variables, namely the $(1,0)$ and $(0,1)$ parts of the exterior derivative $d$:
\[
\del u  = \sum_i \del_{z_i}\, dz_i, \quad \bar{\del} u = \sum_i \del_{\bar{z}_i} \, d\bar{z}_i
\]

\begin{Def}
	For the tensor fields over $\C$ we define
	$\curl_R$ as
	\[
	\curl_R (f dz \otimes d\bar{w}) := dz \otimes *\del(fd\bar{w})
	\]
	and $\overline{\curl}_L$ as
	\[
	\overline{\curl}_L (f dz \otimes d\bar{w}) := *\bar{\del}(fdz) \otimes d\bar{w}.
	\]
\end{Def}
\begin{Def}
	Let $\Omega$ be a simply connected domain in $\C^n$. We say that $u$ is a very weak solution to the 2-Hessian equation
	\begin{equation}\label{eq:cx-2-Hessian}
		S^\C_2(u) = f
	\end{equation}
	if $u$ satisfies
	\[
	-\frac{1}{2}\tr \left( \curl_R\overline{\curl}_L \right) \left( \del u \otimes \bar{\del} u \right) = f.
	\]
	in the weak sense.
\end{Def}

Contrary to the real case, in the complex case the 2-Hessian has a much simpler definition of the very weak solutions. Let us introduce the operator
\[
d^c = \frac{i}{2}(\bar{\del} - \del).
\]
We say that $u$ is the very weak solution of the 2-Hessian equation~\eqref{eq:cx-2-Hessian} if it satisfies
\[
dd^c \left( du \wedge d^c u \right) \wedge \left( dd^c \frac{|z|^2}{2} \right)^{n-2} = f
\]
in the weak sense.

\paragraph{Kernel.} Let us consider the kernel of the $\mathrm{curl}\overline{\mathrm{curl}}$  operator in the space $\C^{n \times n}_\herm \left( \Omega \right)$. As in the case of $n = 2$ it will contain matrices of the form $\herm \del w$ for any regular enough $w :\C^n \rightarrow \C^n$. Recall that
\[
\herm \del w := \frac{1}{2} \left( \del w + \left( \del w \right)^{*} \right),
\]
where $^*$ stands for the hermitian conjugate and $\del w$ is the matrix $\left[ \del w_i / \del z_j \right]_{1 \leq i,j \leq n}$. However, if $n > 2$ the kernel gets considerably bigger. For this let us consider what happens with a single entry $a_{i\bar{j}} dz_i \otimes d\bar{z}_j$ of the hermitian matrix $A$ under the operator. If $i \neq j$ we end up with $-\del^2_{i\bar{j}} a_{i\bar{j}}$. Since the matrix is hermitian, i.e. $a_{i\bar{j}} = \bar{a}_{j \bar{i}}$ we get 
\[
\curl_R \overline{\curl}_L \left( a_{i\bar{j}} dz_i \otimes d\bar{z}_j + a_{j\bar{i}} dz_j \otimes d\bar{z}_i \right) = -2 \mathfrak{R} \left( \del_{i\bar{j}} a_{i \bar{j}} \right),
\]
with $\mathfrak{R}$ being the real part. If however $i = j$ we end up with $\Delta_i a_{i\bar{i}}$, where $\Delta_i = \sum_{j \neq i} \del^2_{j\bar{j}}$. Therefore for a hermitian matrix field $A$ to lie in the kernel of $\curl\overline{\curl}$ it must satisfy
\begin{equation}\label{eq:2-Hess-kernel}
	\sum_j \Delta_j a_{j\bar{j}} = 2 \sum_{i < j} \mathfrak{R} \left( \del_{i\bar{j}} a_{i\bar{j}} \right).
\end{equation}

\paragraph{Notation.} Let us fix the notation for this paper. By $|\cdot|_k$ we define the semi-norm
\[
|u|_k := \max_{|\alpha|= k } \sup_\Omega \left| \frac{\del^\alpha u}{\del  x_1^{\alpha_1} \ldots \del x_k^{\alpha_k}}(x)\right|.
\]
For $k = 0$ this is simply a supremum of $|u|$ and is in fact a norm. By $\| \cdot \|_k$ we define the norm
\[
\| u \|_k := \sum_{i = 0}^k |u|_k.
\]
H\"{o}lder norms and semi-norms will be denoted by Greek letters and are defined as
\[
| u |_\alpha := \sup_{x \neq y} \frac{| u(x) - u(y) | }{|x - y|^\alpha} 
\]
and
\[
\| u \|_{k;\alpha} := \| u \|_k + \sum_{|\alpha| = k} \left| \frac{\del^\alpha u}{\del  x_1^{\alpha_1} \ldots \del x_k^{\alpha_k}} \right|_\alpha.
\]

\paragraph{Results.} Let $\Omega$ be a bounded simply-connected domain in $\C^n$. Given the $S_2^\C \left( v \right)$ operator defined above we are looking for a very weak solution to the problem
\begin{equation} \label{eq:MA-vw}
	S^\C_2(v) = f. \tag{$\star$}
\end{equation}

In that direction we prove that the very weak solutions to~\eqref{eq:MA-vw} satisfy the homotpy principle provided they are in class $C^{1,\beta}$ for small enough $\beta$.

\begin{Thm}\label{Thm:main}
	Let $\Omega$ be a bounded simply-connected domain in $\C^n$. For any $ f \in L^p \left( \Omega \right) $ with $ p > (constant)$ and any $ \beta < \frac{1}{1 + 2n} $ the $C^{1,\beta}$ very weak solutions to~\eqref{eq:MA-vw} are dense in $C^0 \left( \overline{\Omega} \right) $.
\end{Thm}

The proof is virtually the same as in the case of the real 2-Hessian announced recently in~\cite{LiQiu}. However we were able to achieve a better H\"{o}lder exponent thanks to a generalization of the kernel trick by Cao and Szekeleyhidi~\cite[Proposition 3.1]{CaoSzek19}. This generalization works (with slightly different proofs) for both real and complex 2-Hessian operators thus we may apply it to the analogus result in the real case to get an improvement of the result of Li and Qiu. Namely we have  
\begin{Cor}
	Let $\Omega$ be a bounded simply-connected domain in $\R^n$. For any $ f \in L^p \left( \Omega \right) $ with $ p > (constant) $ and any $ \beta < \frac{1}{1 + 2n} $ the $C^{1,\beta}$ very weak solutions to
	\[
	S_2 (u) = f
	\]
	are dense in $C^0 \left( \overline{\Omega} \right) $.
\end{Cor}

\section{Preliminaries.}

First let us recall an easy, but essential interpolation inequality. There is a constant dependent on $\alpha$ such that for any $C^1$ function $f$ we have
\begin{equation}\label{eq:Holder-interpolation}
	| f |_\alpha \leq C \| f \|_0^{1 - \alpha} | f |_1^{\alpha}.
\end{equation}

Next we would like to recall the relationship between the $C^k$ norms and mollification. Namely we have

\begin{Lem}\label{Lem:mollify-est}
	Let $\phi$ be a standrard mollifier (i.e. smooth nonnegative function, supported in the unit ball, rotationally symmetric and integrable to 1). By $\phi_l$ for any $l >0$ we denote $l^{-n} \phi \left( \frac{x}{l} \right) $. Then if $f * \phi_l$ is a convolution we get
	\begin{align}
		\| \nabla^{ (m) } \left( f * \phi_l \right)\|_0 &\leq \frac{ C \| f \|_0 }{ l^m } \\
		\| f - f * \phi_l \| & \leq C \min \left\{ l^2 \| \nabla^2 f \|_0 , l \| \nabla f \|_0, l^\beta \| f \|_{0,\beta} \right\} \\
		\| (fg) * \phi_l  - (f * \phi_l)(g * \phi_l) \|_r &\leq C \| f \|_1 \| g \|_1 l^{2-r}
	\end{align}
	
\end{Lem}

\subsection{Diagonalization}

For the following Proposition the norm of a matrix will be defined as
\[
| M | := \sup_{ x \in S^{n-1} } | Mx |.
\]

Now we will prove the analogue of the diagonalization proposition from~\cite[Proposition 3.1]{CaoSzek19}

\begin{Prop}\label{Prop:diagonalizacja}
	For any $j \in \N , \ 0 < \alpha < 1$ there exist constants $M_1, M_2, \ldots$ and $\tilde{\sigma}$ depending on $j, \alpha$ and $n$ such that the following holds. If $H \in C^{j,\alpha} \left( \Omega, \C^{n \times n}_{\herm} \right)$ satisfies
	\[
	\| H - \Id \|_\alpha \leq \tilde{\sigma},
	\]
	then there exists $\kerk \in C^{j+1, \alpha} \left( \Omega, \C^{n \times n} \right) \, \cap \, \ker_w \left( \mathrm{curl}\overline{\mathrm{curl}} \right)$ and $d_1,\ldots,d_n \in C^{j, \alpha} \left( \Omega, \R \right)$ such that
	\begin{equation}\label{eq:diag-potencjal}
		H + \kerk = diag(d^2_1,\ldots,d^2_n)
	\end{equation}
	and the following estimates hold:
	\begin{align}
		\begin{split}\label{eq:diago-est}
			\| d - 1 \|_\alpha + \| K \|_\alpha &\leq M_1 \| H - \Id \|_\alpha \\
			\| d \|_{j ; \alpha} + \| K \|_{j ; \alpha} &\leq M_j \| H - \Id \|_{j ; \alpha}
		\end{split}
	\end{align}
\end{Prop}
\begin{Rem}
	By $\mathrm{ker}_w$ we mean the space of weak solutions to the kernel equation~\eqref{eq:2-Hess-kernel}. The exact conditions defining this space will be written down shortly, at the begining of the proof. 
\end{Rem}

\begin{proof}
	Let us start by denoting the entries of $H$ by $h^{j\bar{k}}$. 
	
	Next, we will write down the weak formulation of the kernel equation~\eqref{eq:2-Hess-kernel} for some test function $\varphi$:
	\begin{align}
		0 &= \int_\Omega \varphi \, \curl \overline{\curl}\kerk \nonumber \\ 
		&= \int_\Omega \varphi \left( \sum_{j} \Delta_j \kerk^{jj} - \sum_{k \neq j} \del^2_{k\bar{j}}\kerk^{k\bar{j}} \right) \nonumber \\
		\begin{split}\label{eq:cx-Hessian-slabe-jadro}
			&= -\int_\Omega \sum_j \del_{j}\varphi \frac{1}{2} \left( \sum_{k \neq j } \del_{\bar{j}} \kerk^{k\bar{k}} - \del_{\bar{k}} \kerk^{j\bar{k}} \right) + \\
			&\phantom{= -\int \Omega} \sum_j \del_{\bar{j}}\varphi \frac{1}{2} \left( \sum_{k \neq j } \del_{j} \kerk^{k\bar{k}} - \del_{k} \kerk^{k\bar{j}} \right)
		\end{split}
	\end{align}
	We wish to represent  the above integral as
	\[
	\int_\Omega  d\varphi \wedge dF
	\]
	for some $(2n-1)$-form $F$. Let us denote
	\begin{align*}
		dz^{j,k} &:= \frac{i}{2}dz_1 \wedge d\bar{z}_1 \wedge \ldots \wedge \frac{i}{2}dz_{j-1} \wedge d\bar{z}_{j-1} \wedge \frac{i}{2}dz_{j+1} \wedge d\bar{z}_{j+1} \wedge \ldots \wedge \\
		&\wedge \frac{i}{2}dz_{k-1} \wedge d\bar{z}_{k-1} \wedge \frac{i}{2}dz_{k+1} \wedge d\bar{z}_{k+1} \wedge \ldots \wedge \frac{i}{2}dz_{n} \wedge d\bar{z}_{n} \\
		dz^{j} &:= dz^{j,j}.
	\end{align*}
	Let us also recall that the canonical volume form in $\C^n$ can be expressed as
	\[
	d\mathrm{vol} = \bigwedge_{j = 1}^n \frac{i}{2} dz_j \wedge d\bar{z}_j,
	\]
	from which it follows that the Hodge star operator works as
	\[
	*\left( \frac{i}{2} dz_j \wedge d\bar{z}_k \right) = \frac{i}{2} d\bar{z}_j \wedge dz_k \wedge dz^{j,k}, \quad *\left( \frac{i}{2} dz_j \wedge d\bar{z}_j \right) = dz^{j}.
	\]
	In such a notation $dF$ can be expressed as
	\[
	dF = \sum_{j} F^j \frac{i}{2} dz_j \wedge dz^j + F^{\bar{j}} \frac{i}{2} d\bar{z}_j \wedge dz^{j}.
	\]
	For some function $F^j, F^{\bar{j}},\, j = 1,\ldots, n$. Comparing with~\eqref{eq:cx-Hessian-slabe-jadro} we arrive at the following identities
	\begin{align}
		\begin{split}\label{eq:dF-kernel}
			F^{\bar{j}} &= \sum_{k \neq j } \del_{\bar{j}} \kerk^{k\bar{k}} - \del_{\bar{k}} \kerk^{j\bar{k}} \\
			F^{j} &= \sum_{k \neq j } \del_{k} \kerk^{k\bar{j}} - \del_{j} \kerk^{k\bar{k}}
		\end{split}
	\end{align}
	
	Now we require that $F$ is of the form $*G$ for some $2$-form $G$. We can be even more specific and require that $G$ be a $(1,1)$-form. In that case let us denote the coefficients of $G$ by
	\[
	G = \sum_{j \neq k} \frac{i}{2} g^{j\bar{k}} \, \frac{i}{2} dz_j \wedge d\bar{z}_k + \sum_j \frac{i}{2} g^{j \bar{j}} \, \frac{i}{2} dz_j \wedge d\bar{z}_j.
	\]
	
	We can now define the coefficients of $G$ as
	\begin{align}
		\begin{split}\label{eq:potencjaly-2-formy}
			g^{j\bar{k}} &= \del_{j\bar{j}}\psi^{j\bar{k}} - \del_{k\bar{k}} \psi^{j\bar{k}} - \sum_{l \notin \{j,k \} } \del_{k\bar{l}} \psi^{j\bar{l}} + \sum_{l \notin \{j,k \} } \del_{l\bar{j}}\psi^{l\bar{k}}, \\
			g^{j\bar{j}} &= \sum_{k \neq j} \del_{k\bar{j}}\psi^{k\bar{j}} - \sum_{k \neq j} \del_{j\bar{k}} \psi^{j\bar{k}}.
		\end{split}
	\end{align}
	
	On the other hand, since $dF = d *G$ for the coefficients of $dF$ we must have
	\begin{align}
		\begin{split}\label{eq:dF=d*G}
			F^j &= \del_{j}g^{j\bar{j}} + \sum_{k \neq j} \del_{k} g^{k\bar{j}} \\
			F^{\bar{j}} &= \del_{\bar{j}}g^{j\bar{j}} + \sum_{k \neq j} \del_{\bar{k}} g^{j\bar{k}}.
		\end{split}
	\end{align}
	Therefore combining~\eqref{eq:dF-kernel},~\eqref{eq:potencjaly-2-formy} and~\eqref{eq:dF=d*G} we arrive at the following formula
	\begin{align*}
		\sum_{k \neq j} \kerk^{kk}_{\bar{j}} - \kerk^{j\bar{k}}_{\bar{k}} = &\sum_{k \neq j} \del_{\bar{j}k\bar{j}} \psi^{k\bar{j}} - \del_{\bar{j}j\bar{k}}\psi^{j\bar{k}} \\
		+ &\sum_{k \neq j} \left( \del_{\bar{k}j\bar{j}}\psi^{j\bar{k}} - \del_{\bar{k}k\bar{k}} \psi^{j\bar{k}} - \sum_{l \notin \{j,k \} } \del_{\bar{k}k\bar{l}} \psi^{j\bar{l}} + \sum_{l \notin \{j,k \} } \del_{\bar{k}l\bar{j}}\psi^{l\bar{k}} \right) \\
		= & \sum_{k \neq j} -\del_{\bar{k}} \Delta \psi^{j\bar{k}} + \sum_{k \neq j} \del_{\bar{j}} \left(  \sum_{ l \neq m} \del_{l\bar{m}} \psi^{l \bar{m}} \right)
	\end{align*}
	and for $F^j$ we get 
	\begin{align*}
		\sum_{k \neq j} \kerk^{k\bar{j}}_k - \kerk^{kk}_j = &\sum_{k \neq j} \del_{jk\bar{j}}\psi^{k\bar{j}} - \del_{jj\bar{k}}\psi^{j\bar{k}} \\
		+ &\sum_{k \neq j} \left( \del_{kk\bar{k}}\psi^{k\bar{j}} - \del_{kj\bar{j}} \psi^{k\bar{j}} - \sum_{l \notin \{j,k \} } \del_{kj\bar{l}} \psi^{k\bar{l}} + \sum_{l \notin \{j,k \} } \del_{kl\bar{k}}\psi^{l\bar{j}} \right) \\
		= &\sum_{k \neq j} \del_k \Delta \psi^{k\bar{j}} - \sum_{k \neq j} \del_j \left( \sum_{l \neq m} \del_{l\bar{m}}\psi^{l\bar{m}} \right).
	\end{align*}
	
	Therefore we may define the entries of the weak kernel as 
	\[
	\kerk^{j\bar{k}} = \Delta \psi^{j\bar{k}}, \quad \kerk^{jj} = \sum_{l \neq m} \del_{l\bar{m}} \psi^{l\bar{m}}
	\]
	for any collection of regular enough functions $\psi^{j\bar{k}}$. In particular we may require that they solve the zero-boundary Dirichlet problem $\Delta \psi^{j\bar{k}} = -h^{j\bar{k}}$, which then makes $\kerk$ hermitian. We get the desired inequalities for $\kerk$ from the Schauder esimates for the entries of $\psi^{j\bar{k}}$. The estimates for the diagnoal entries $(d^2_1 , \ldots, d^2_n)$ easily follow from here since we may add any constant to the diagonal of $\kerk$ and take $\tilde{\sigma}$ small enough so that $H + \kerk$ is sufficiently close to $\Id$.
\end{proof}

\section{Proof}

We proceed with the proof of~\autoref{Thm:main}

\paragraph{Step 1. Preparation.} Let us fix $n$ to be the complex dimension of $\Omega$. For now we are looking for a matrix solution to the equation
\[
-\frac{1}{2} \tr \left( \curl\overline{\curl} \, A \right) = f.
\]
Notice that putting $A = (u + \tau)\mathrm{Id}$ for any constant $\tau$ gives 
\[
-\frac{1}{2} \tr \left( \curl\overline{\curl}\,\left[ (u + \tau)\mathrm{Id} \right] \right) = (1-n)\Delta u = f
\]
and this is always weakly solvable with $W^{2,p} ( \Omega )$ solution provided $f \in L^p\left( {\Omega} \right)$. 

We will solve~\eqref{eq:MA-vw} if we are able to represent $A$ as
\[
A = \del v \otimes \bar\del v \ + \herm w + \kerk
\]
for some function $v: \Omega \rightarrow \R$, map $w: \Omega \rightarrow \C^n$ and some element of the kernel $\kerk : \Omega \rightarrow \C^{n \times n}_
\herm$ (of course $\herm w$ is also in the kernel of $S_2^\C$, but it serves a different purpose then $\kerk$ so it is better to write them apart). Let us recall that in order for that to be possible we must have $v \in W^{1,2}(\Omega)$ which is always satisfied whenever $\del v \otimes \bar{\del} v \in W^{2,p}$, for any $p \geq 1$. Moreover, for the proof of flexibility we require $A$ to be H\"{o}lder continuous (thus extandable to $\overline{\Omega}$), which by Morrey's theorem implies that $p > n$. Then $A \in C^{0,\kappa}$ with $\kappa = 2 - \frac{2n}{p}$.

By density of $C^\infty(\overline{\Omega})$ in $C^0(\overline{\Omega})$, to prove our main theorem it is enough to approximate any smooth function with subsolutions to \eqref{eq:MA-vw}. Thus, let $u^{\flat}$ be any fixed smooth real-valued function defined on $\overline{\Omega}$. Recall that $u$ is the solution the Poisson equation. Using a neat idea from~\cite{CHI23} let us fix
\[
\tau = \left( K + \sigma^{-1} \right) \left( \| u \|_0 + \| u^\flat \|_2^2 + 100 \right).
\]
We put $\hat{A} := \delta_1 \tau^{-1} A$ where $\delta_1$ is a small constant to be fixed later and notice that if we find a solution $ \left( v, w, \kerk \right)$ to 
\[
\cchermp{v}{w} + \kerk  = \hat{A}
\]
then $\left( \delta_1^{-1/2}\tau^{1/2}v, \delta_1^{-1}\tau w, \delta_1^{-1} \tau \kerk \right)$ is the solution to
\[
\cchermp{v}{w} + \kerk = A
\]
and thus solves~\eqref{eq:MA-vw}. However for such a choice of $\hat{A}$ the tripe $\left( \delta_1^{1/2} \tau^{-1/2} u^\flat, 0, 0 \right)$ sastisfies the assumptions of~\autoref{Prop:Stage} with $A$ replaced by $\hat{A}$.

\paragraph{Step 2. Corrugations.} In keeping with the notation introduced in~\cite{CaoSzek19} we take for the perturbations the following functions:
\[
\Gamma_1 (s,t) = \frac{s}{\pi} \sin (2\pi t), \quad
\Gamma_2 (s,t) = -\frac{s^2}{4\pi} \sin (4\pi t)
\]
and note that they satisfy
\begin{align}\label{eq:perturbacje-rown}
	\Gamma(s, t+1) &= \Gamma(s,t), \nonumber \\
	\frac{1}{2}|\del_t\Gamma_1(s,t)|^2 + \del_t\Gamma_2(s,t) &= s^2.
\end{align}
along with estimates
\begin{align}\label{eq:perturbacje-est}
	| \del^k_t \Gamma_1 | + | \del_s \del_t^k \Gamma_2 | &\leq Cs \\
	| \del_s \del_t^k \Gamma_1 | &\leq C \\
	| \del_t^k \Gamma_2 | &\leq Cs^2
\end{align}
for any nonnegative integer $k$.

\subsection{Stage}

Here we will construct the stage iteration that will be crucial for the regularity of the final solution. 

At this point we may define the amplitudes $\delta$ and frequencies $\lambda$ as
\begin{equation}
	\delta_q := a^{-b^q} \qquad \lambda_q := a^{cb^q}. 
\end{equation}
for some constants $a, b, c$ to be determined later. We also define the deficit tensor as
\[
D_q := A \cchermm{v_q}{w_q} - \kerk_q -\delta_{q+1} \Id
\]
The construction of the proof will mostly follow~\cite{LiQiu}, however to apply~\autoref{Prop:diagonalizacja} we need to pay the price of contolling the H\"{o}lder norm of the deficit tensor which brings aditional estimates. 

\begin{Prop}[Stage]\label{Prop:Stage}
	There are positive constants $K > 1$ and $\sigma > 0$ such that if function $v_q$ and map $w_q$ both of class $C^2$ as well as matrix field $\kerk_q$ of class $C^1$ satisfy
	\begin{align}
		\| v_q \|_1 + \| w_q \|_1 + \| \kerk_q \|_0 &\leq K, \\
		\| v_q \|_2 + \| w_q \|_2 + \| \kerk_q \|_1  &\leq K \delta^{1/2}_q\lambda_q, \\
		\| D_q \|_\alpha &\leq \sigma\delta_{q+1}.
	\end{align}
	Then there are function $v_{q+1}$, map $w_{q+1}$, both of class $C^2$, and matrix field $\kerk_{q+1}$ of class $C^1$ in the weak kernel of $\curl \, \overline{\curl}$ such that
	\begin{align}
		\| v_{q+1} - v_q \|_1 \leq K\delta_q^{1/2}, \quad \| w_{q+1} - w_q \|_1 &\leq K \delta^{1/2}_q, \label{eq:stage-krok-C1-est} \\
		\| v_{q+1} \|_2 + \| w_{q+1} \|_2 &\leq K \delta^{1/2}_q \lambda_q, \label{eq:stage-krok-C2-est} \\
		\| D_{q+1} \|_\alpha &\leq \sigma \delta_{q+2}.
	\end{align}
	
\end{Prop}

\begin{proof}
	First, we mollify $A, v_q, w_q$ and $\kerk_q$ on a scale $l$ to get $\tilde{A}, \tilde{v}, \tilde{w}$ and $\tilde{\kerk}$. Then we define the following matrix
	\[
	\tilde{D} := \tilde{A} \cchermm{\tilde
		{v}}{\tilde{w}} - \tilde{\kerk} - \delta_{q+2} \Id
	\]
	Let us also define for convenience the following constant
	\begin{equation}
		\mu_0 := K \delta_{q+1}^{-1/2} \delta_q^{1/2} \lambda_q
	\end{equation}
	
	By our assumptions and the mollification estimates~\autoref{Lem:mollify-est} we have the following estimate
	\begin{align}
		\| \tilde{D} - \delta_{q+1}\Id \|_\alpha &\leq \| D_q * \phi_l \|_\alpha + 
		\frac{1}{2} \left\| \left( \del v_q \otimes \bar{\del} v_q \right) * \phi_l - \del \tilde{v}_q \otimes \bar{\del} \tilde{v}_q \right\|_\alpha 
		+ \delta_{q+2} \nonumber \\ 
		&\leq \| D_q \|_0 + C l^{2-\alpha} \| v_q \|^2_2 + \delta_{q+2} \\
		&\leq 2 \sigma \delta_{q+1} + C \mu^2_0 l^{2 - \alpha} \delta_{q+1}
	\end{align}
	provided we have
	\begin{equation}
		\delta_{q+2} \leq \sigma \delta_{q+1}
	\end{equation}
	
	Let us fix now $l$ as
	\begin{equation}
		l^{2 - \alpha} := \frac{\sigma}{C K^2} \delta_{q+1} \delta_q^{-1} \lambda_q ^{-2}.
	\end{equation}
	That way we get the following estimate
	\[
	\| \tilde{D} - \delta_{q+1} \|_{\alpha} \leq 3 \sigma \delta_{q+1}
	\]
	and considering the mollification estimates~\autoref{Lem:mollify-est} we also get 
	\begin{equation}
		| \tilde{D} - \delta_{q+1} \Id |_k \leq C \sigma \delta_{q+1} l^{-k}
	\end{equation}
	
	Provided we choose $\sigma < \tilde{\sigma}/3$, where $\tilde{\sigma}$ is the constant appearing in~\autoref{Prop:diagonalizacja} we may apply~\autoref{Prop:diagonalizacja} to $\tilde{D}/ \delta_{q+1}$ to get a element of the kernel $\hat{\kerk}$ and functions $\left( d_1, \ldots, d_n \right)$ such that
	\begin{equation}\label{eq:kernel-trick}
		\tilde{D} = \delta_{q+1} \hat{\kerk} + \delta_{q+1} \mathrm{diag} \left( d^2_1,\ldots, d^2_n \right).
	\end{equation}
	Recall that by~\eqref{eq:diago-est} we have that
	\begin{align}\label{eq:diago-delta-est}
		\| d_j \|_{k ; \alpha} + \| \hat{\kerk} \|_{k ; \alpha} \leq C \delta^{1/2}_{q+1} \| \bar{D} - \Id \|_{k ; \alpha} \leq C \delta^{1/2}_{q+1} l^{-k}
	\end{align}
	in particular
	\begin{align}
		\| d \|_0 &\leq C \sigma \delta^{1/2}_{q+1}, \\
		| d |_k &\leq C \delta^{1/2}_{q+1} l^{-k}, \\
		\| \kerk \|_0 &\leq  C \sigma \delta_{q+1}, \\
		| \kerk |_1 &\leq C \delta_{q+1} l^{-1}.
	\end{align}
	
	At this point we are able to define the element of weak kernel $\kerk_{q+1}$ as
	\begin{equation}
		\kerk_{q+1} := \delta_{q+1} \hat{\kerk}.
	\end{equation}
	
	The final perturbation scheme is the following. Let us put $\hat{v}_0 = \tilde{v}$ and $\hat{w}_0 = \tilde{w}$. Then
	\begin{align*}
		\hat{v}_i =& v_{i-1} + \frac{1}{\mu_i} \Gamma_1 \left( d_i(z), \mu_i \left( z + \bar{z} \right) \cdot e_i \right), \\
		\hat{w}_i =& w_{i-1} - \frac{1}{\mu_i} \Gamma_1 \left( d_i(z), \mu_i ( z + \bar{z} ) \cdot e_i \right) \bar{\del} \hat{v}_{i-1} + \frac{1}{\mu_i} \Gamma_2 \left( d_i(z), \mu_i ( z + \bar{z} ) \cdot e_i \right) e_i,
	\end{align*}
	for $i = 1, \ldots, n$. Here $e_i$ are the vectors of the canonical basis of $\C^n$ and $\{ \mu_i \}$ is an increasing series of large contsnts to be fixed, with the exception of $\mu_n$, which we set as
	\[
	\mu_n := \lambda_{q+1}.
	\]
	
	We observe that
	\begin{align}
		| \hat{v}_{i+1} - \hat{v}_i |_j &\leq \frac{1}{\mu_{i+i}} | \Gamma_1 |_j \\
		| \hat{w}_{i+1} - \hat{w}_i |_j &\leq \frac{1}{\mu_{i+i}} \left( | \Gamma_1 |_j | \hat{v} |_0 + | \Gamma_1 |_0 | \hat{v} |_j + |\Gamma_2|_j \right)
	\end{align}
	
	Then we get the estimates
	\begin{align*}
		\| \hat{v}_{i+1} - \hat{v}_{i} \|_0 &\leq \frac{C}{\mu_{i+1}}, \\
		| \hat{v}_{i+1} - \hat{v}_{i} |_1 &\leq C \left( \frac{ | d_{i+1} |_1 }{ \mu_{i+1} } + | d_{i+1}|_0 \right), \\
		| \hat{v}_{i+1} - \hat{v}_{i} |_2 &\leq C \left( \frac{| d_{i+1} |_2}{ \mu_{i+1} } + | d_{i+1} |_1 + \mu_{i+1}| d_{i+1} |_0 \right), \\
		| \hat{v}_{j+1} - \hat{v}_{j} |_3 &\leq C \left( \frac{ |d_{j+1}|_3 }{\mu_{j+1}} + |d_{j+1}|_2 + \mu_{j+1}|d_{j+1}|_1 +  \mu_{j+1}^2|d_{j+1}|_0\right).
	\end{align*}
	Putting~\eqref{eq:diago-delta-est} into the above we get that
	\begin{align}
		| \hat{v}_{j+1} - \hat{v}_j |_k \leq C(\sigma) \delta^{1/2}\mu_{j+1}^{k-1}.
	\end{align}
	Moreover, we get the following estimates for $| \hat{v}_j |_k$ with $k \in \{0,1,2,3 \}$:
	\begin{align}
		| \hat{v}_j |_0 &\leq | \hat{v}_0 |_0 + \sum_{k = 1}^j | \hat{v}_k - \hat{v}_{k-1} |_0 \leq | \hat{v}_0 |_0 + \frac{ C }{ \mu_1 } \label{eq:v_j-C0-est}\\
		| \hat{v}_j |_k &\leq | \hat{v}_0 |_k + C \delta^{1/2} \mu_{j+1}^{k-1} \label{eq:v_j-Ck-est}
	\end{align}
	
	Similarly, for the maps $\hat{w}_j$ we get the estimates
	\begin{align*}
		\| \hat{w}_{i+1} - \hat{w}_{i} \|_0 &\leq \frac{C}{\mu_{i+1}} \\
		| \hat{w}_{i+1} - \hat{w}_{i} |_1 &\leq C \left( \frac{ | d_{i+1} |_1 | \hat{v}_{i} |_1}{ \mu_{i+1} } +
		\frac{ | d_{i+1} |_0 |\hat{v}_{i} |_2 }{ \mu_{i+1} } +
		| d_{i+1} |_0| \hat{v}_{i} |_1 + 
		\frac{ |d_{i+1} |_1 | d_{i+1} |_0 }{ \mu_{i+1} } + 
		| d_{i+1} |^2_0 \right) \\
		| \hat{w}_{i+1} - \hat{w}_{i} |_2 &\leq C \left( \frac{1}{ \mu_{i+1} } \left( { |d_{i+1}|_2 | \hat{v}_{i}|_1}  +
		{ | d_{i+1} |_1 | \hat{v}_{i} |_2 } +
		{ |d_{i+1}|_0 |\hat{v}_{i}|_3 } +
		{ |\hat{v}_{i}|_2 } +
		{ |d_{i+1}|_2 |d_{i+1}|_0 } + 
		{ |d_{i+1}|^2_1 } \right) \right. + \\
		&\phantom{ C \ ( \frac{1}{ \mu_{i+1} } ( \ } | d_{i+1} |_1 | \hat{v}_{i} |_1 + 
		| d_{i+1} |_0 | \hat{v}_{i} |_2 +
		| d_{i+1} |_0 | d_{i+1} |_1 + \\
		&\phantom{ C \ ( \frac{1}{ \mu_{i+1} } ( \ } \left. \mu_{i+1} \left( | d_{i+1} |^2_0 + | d_{i+1} |_0 | \hat{v}_{i} |_1 \right) \right) 
	\end{align*}
	Again, we can plug into it~\eqref{eq:v_j-C0-est},~\eqref{eq:v_j-Ck-est} and~\eqref{eq:diago-delta-est} to get
	\begin{align}
		| \hat{w}_{j+1} - \hat{w}_j |_1 &\leq C \left( \delta_q + \delta_q^{1/2} \right) \\
		| \hat{w}_{j+1} - \hat{w}_j |_2 &\leq C \left( \mu_{j+1} \delta_q^{1/2} \right)
	\end{align}
	
	Finally, we may estimate the new deficit tensor. At each step we get a perturbation error defined as
	\begin{align*}
		\mathcal{E}_j := &\frac{1}{2} \del v_{j} \otimes \bar{\del} v_{j} + \herm \del w_j - \frac{1}{2} \del v_{j-1} \otimes \bar{\del} v_{j-1} - \herm \del w_{j-1} - d_j^2 \left( e_j \otimes e_j \right)= \\
		- &\frac{1}{ \mu_j } \Gamma_1 \del \bar{\del} \hat{v}_j + \frac{1}{ \mu_j } \left( \del_s \Gamma_2 + \del_s \Gamma_1 \del_t \Gamma_1 \right) \herm \left( \del d_j \otimes e_i \right)
		+ \frac{1}{ 2 \mu^2_j } | \del_s \Gamma_1 |^2 \del d_j \otimes \bar{\del} d_j.
	\end{align*}
	We estimate this expression by
	\begin{align}
		\begin{split}
			\| \mathcal{E}_j \|_0 \leq C &\left( \frac{1}{\mu_j} \| \Gamma_1 \|_0 | \hat{v}_j |_2 + \frac{1}{ \mu_j } \left( \| \del_s \Gamma_2 \|_0 + \| \del_s \Gamma_2 \del_t \Gamma_1 \| \right) | \del d_i |_0 \right. \\
			& \left. \phantom{((}+ \frac{1}{ \mu_j^2  } \| \del_s \Gamma_1 \|^2_0 | \del d_i |^2 \right)
		\end{split}
		\\
		\leq \ & \delta_{q+1} \frac{\mu_{j-1}}{\mu_j}.
	\end{align}
	For the interpolation inequalities we also need the $C^1$-seminorm estimate, thus
	\begin{align}
		| \mathcal{E}_j |_1 &\leq \frac{1}{\mu_j} \left( |\del_s \Gamma_1 | |\hat{v}_j|_2 |d|_1 \mu_j | \del_t \Gamma_1| | \hat{v} _j |_2 + |\Gamma_1|_0 |\hat{v}_j|_3 \right) \\ 
		&+ \frac{|d|_1}{\mu_j} \left( |\del^2_s \Gamma_2 | |d|_1 + \mu_j |\del_{st}^2 \Gamma_2 | + \mu_j |\del^2_{st} \Gamma_1 \del_t \Gamma_1 |  + |d|_1 |\del_s \Gamma_1 \del^2_{st} \Gamma_1 | + \mu_j |\del_s \Gamma_1 \del^2_{tt} \Gamma_1 | \right) \\
		&+ \frac{|d|_2}{\mu_j} \left( \del_s \Gamma_2 + \del_s \Gamma_1 \del_t \Gamma_1 \right) \\
		&+ \frac{1}{\mu_j^2} \left( |\del_s \Gamma_1| \left( |\del^2_s \Gamma_1 | + |\del^2_{st} \Gamma_1| \right) |d|_1^2 + | \del_s \Gamma_1 |^2 
		|d|_2 |d|_1\right) 
	\end{align}
	
	Since the new deficit tensor is defined as
	\[
	D_{q+1} = A \cchermm{v_{q+1}}{w_{q+1}} - \kerk_{q+1} - \delta_{q+2} \Id,
	\]
	thanks to~\eqref{eq:kernel-trick} we arrive at the estimate
	\begin{align}
		\| D_{q+1} \|_0 \leq &\| A - \tilde{A} \|_0 + \sum_{j=1}^n \| \mathcal{E}_j \| \\
		\leq & C \left( \| A \|_1 l + \delta_{q+1} \left( \frac{\mu_0}{\mu_1} + \ldots \frac{\mu_{n-1}}{\mu_n} \right) \right).
	\end{align}
	Let us now put 
	\[
	\mu_i := \mu_0^{1 - \frac{i}{n}} \mu_n^{ \frac{i}{n}}
	\]
	and require
	\begin{equation}\label{eq:stale-nierownosci-3}
		l \leq \frac{\sigma \delta_{q+2}}{C} \quad \frac{\mu_0}{\mu_n} \leq \left( \frac{\sigma \delta_{q+2}}{C \delta_{q+1}} \right)^n.
	\end{equation}
	Thefore taking constant $C$ big enough we get the final estimate
	\[
	\| D_{q+1} \|_0 \leq \sigma \delta_{q+2},
	\]
	which concludes the proof of the result
\end{proof}

\paragraph{Step 3. Conclusion.} In order to finish the proof of the main theorem we must show that $v_q$ and $w_q$ converge in an appropriate $C^{1,\alpha}$, $\kerk_q$ converges in $C^\alpha$ and $D_q \rightarrow 0$ in $C^0 \left( \Omega, \C^{n \times n}_\herm \right)$.

For the convergence of $v_q$ and $w_q$ we use the interpolation inequalities of H\"{o}lder norms. Thus from~\eqref{eq:stage-krok-C1-est} and~\eqref{eq:stage-krok-C2-est} we have
\begin{equation}
	\| v_{q+1} - v_q \|_{1;\alpha} \leq \| v_{q+1} - v_q \|_{1}^{1 - \alpha} \| v_{q+1} - v_q \|_{2}^{\alpha} \leq K \delta_{q+1}^{1/2} \lambda^\alpha_{q+1}
\end{equation}
which of course works also for $w_q$. Thus 
\[
\| v_{q+1} - v_q \|_{1;\alpha} \leq K a^{b^q \left( c\alpha - \frac{1}{2} \right)}
\]
and it becomes a Cauchy series as long as
\begin{equation}\label{eq:holder-wykl-1}
	\alpha < \frac{1}{2c}
\end{equation}
For $\kerk_q$ we have
\begin{equation}
	\| \kerk_q \|_\alpha \leq \| \kerk_q \|_0^{ 1 -\alpha } \| \kerk_q \|_1^{\alpha} \leq C \sigma \delta_{q+1} l^{-\alpha} = C \delta^{1 -\frac{\alpha}{2}}_{q+1} \delta_q^{\frac{\alpha}{2}} \lambda_{q}^{\alpha}
\end{equation}
which goes to zero provided
\begin{equation}\label{eq:jadro-holder-warunek}
	\alpha c + \left( \frac{\alpha}{2} - 1 \right) b + \frac{\alpha}{2} < 0.
\end{equation}

Moreover we also must satisfy inequalities~\eqref{eq:stale-nierownosci-3}, which give rise to the following estimates
\begin{align}
	b + c &\geq 2b^2 + 1 + \log_a \left( \frac{C}{K} \right) \label{eq:stale-nierownosc-1}\\
	c &\geq nb + \frac{1}{2} + \frac{1}{b^q (b -1) } \log_a \left( \frac{K C^n}{\sigma^n} \right) \label{eq:stale-nierownosc-2}.
\end{align}
Notice that at this point the constants $K, \sigma$ and $C$ are already fixed, thus we may take $a$ big enough so that the logarithmic terms become negative. That way~\eqref{eq:holder-wykl-1} can be satisfied whenever
\begin{equation}\label{eq:holder-wykl-2}
	\alpha < \frac{1}{1+ 2n}
\end{equation}
for $b$ arbirarly close to $1$. We also notice that assuming~\eqref{eq:holder-wykl-1},~\eqref{eq:holder-wykl-2} and~\eqref{eq:stale-nierownosc-2} implies~\eqref{eq:jadro-holder-warunek} and~\eqref{eq:stale-nierownosc-1}.

The only thing left to do is make sure that the solution is close enough to the chosen function $u^\flat$. We have
\begin{align}
	\| u^\flat - v \|_0 \leq \| u^\flat - v_1 \|_0 + \sum_{q = 1}^{\infty} \| v_{q} - v_{q+1} \|_0 \leq \frac{\epsilon}{2} + \sum_q K \delta_q^{1/2}
\end{align}

\section{Real 2-Hessian}

Let us mention briefly that~\ref{Prop:diagonalizacja} can be applied to the real 2-Hessian equation yielding very similar estimates.
\paragraph{Real kernel.} For the real symmetric matrix field $A$ defined on a simply-connected domain $\Omega$ the kernel equation for $\tr( \curl_L \curl_R)$ is almost the same. Namely the coefficient of the field $A$ must satisfy
\[
\sum_j \Delta_j a_{jj} = 2 \sum_{i < j} \del_{ij} a_{ij}.
\]

\begin{Prop}\label{Prop:diagonalizacja-R}
	For any $j \in \N , \ 0 < \alpha < 1$ there exists $M_1, M_2$ and $\sigma_1$ depending on $j, \alpha$ and $n$ such that the following holds. If $S \in C^{j,\alpha} \left( \Omega, \R^{n \times n}_{\mathrm{sym}} \right)$ satisfies
	\[
	\| S - \Id \|_\alpha \leq \sigma_1,
	\]
	then there exists $\kerk \in C^{j+1, \alpha} \left( \Omega, \R^{n \times n}_{\mathrm{sym}} \right) \, \cap \, \ker_w \left( \mathrm{curl} \, \mathrm{curl} \right)$ and $d_1,\ldots,d_n \in C^{j, \alpha} \left( \Omega, \R \right)$ such that
	\begin{equation}\label{eq:diag-potencjal-R}
		S + \kerk = diag(d^2_1,\ldots,d^2_n)
	\end{equation}
	and the following estimates hold:
	\begin{align}
		\begin{split}\label{eq:diago-est-R}
			\| d - 1 \|_\alpha + \| K \|_\alpha &\leq M_1 \| S - \Id \|_\alpha \\
			\| d \|_{j ; \alpha} + \| K \|_{j ; \alpha} &\leq M_k \| S - \Id \|_{j ; \alpha}
		\end{split}
	\end{align}
	for some constants $M_1, M_2, \ldots$.
\end{Prop}
\begin{proof}
	We proceed in the same manner as with the complex conterpart. However, because the Hodge star behaves differntly for real and complex spaces the final kernel field will have a differnt form. Let us denote by $s^{ij}$ the entries of $S$ and by $\kerk^{ij}$ the coefficients of the matrix field $\kerk$ from the kernel. Let us write down down the weak form of the kernel equation. Thus for any test function $\varphi$ we must have
	\begin{align}
		0 &= \int_\Omega \varphi \, \mathrm{curl curl}\kerk \\
		&= \int_\Omega \varphi \left( \sum_{i} \Delta_i \kerk^{ii} - 2 \sum_{i \neq j} \del^2_{ij} \kerk^{ij} \right) \\
		&= - \int_\Omega \sum_i \del_i \varphi \left( \sum_{j \neq i } \del_i \kerk^{jj}- \del_j \kerk^{ij} \right).
	\end{align}
	This will vanish if we can find a $(n-2)$-form $F$ such that the last integral can be expressed as
	\[
	\int_\Omega d\varphi \wedge d F.
	\]
	Let us denote by $dx_I^j$ the $(n-1)$-form $dx_1 \wedge \ldots dx_{i-1} \wedge dx_{i+1} \wedge \ldots \wedge dx_n$. We define the $(n-2)$-form $dx_I^{ij}$ in a similar way. Thus if we write down the form $dF$ as $\sum_i F_i\, dx_I^i$ we get
	\[
	F_i = (-1)^{i-1} \sum_{j \neq i} \del_i \kerk^{jj} - \del_j \kerk^{ij}
	\]
	Let us further assume that $F = *G$ for some 2-form $G$ and $G = dH$ for a 1-form $H$. Let us denote the coefficients of $H$ by $h_i$, meaning $H = \sum_i h^idx_i$. Calculating the dependence of $F$ on $H$ we arrive at
	\begin{align*}
		d*dH =& \\
		=&  d* \sum_{i < j} \del_i h^j - \del_j h^i_j dx_i \wedge dx_j \\
		=& d \sum_{i < j} (-1)^{i + j + 1} \del_i h^j - \del_j h^i dx_I^{ij} \\
		=& \sum_{i < j} (-1)^j \del^2_{ii} h^j - \del^2_{ji} h^{i} dx_I^j + (-1)^{i-1} \del^2_{ij} h^{j} - \del^2_{jj} h^i_{jj} dx^i_I \\
		=& \sum_i (-1)^i \sum_{j \neq i} \del^2_{jj} h^i - \del^2_{ji} h^j dx_I^i
	\end{align*}
	If we compare the result with the coefficients of $F_i$, we get
	\begin{equation}\label{eq:potencjaly-jadro}
		\sum_{j \neq i} \del_j \kerk^{ij} - \del_i \kerk^{jj} = \sum_{j \neq i} \del^2_{jj} h^i - \del^2_{ij} h^j.
	\end{equation}
	Suppose there are functions $\psi^{ij}$ for any $1 \leq i < j \leq n$ that satisfy
	\[
	h^i = \sum_{j < i} \del_j \psi^{ji} + \sum_{j > i} \del_j \psi^{ij}.
	\]
	If we plug this into~\eqref{eq:potencjaly-jadro} and compare the left and the right-hand side of we get 
	\begin{align*}
		\sum_{j \neq i} \del_j \kerk^{ij} - \del_i \kerk^{jj} =& \sum_{j \neq i} \left( \sum_{k < i} \del^3_{kjj}\psi^{ki} + \sum_{k > i} \del^3_{kjj} \psi^{ik} - \sum_{l < j} \del^3_{lij} \psi^{lj} - \sum_{l > j} 
		\del^3_{lij} \psi^{jl} \right)  \\
		=& \sum_{j < i} \Delta \del_j \psi^{ji} + \sum_{j > i} \Delta \del_j \psi^{ij}  - 2\sum_{j \neq i} \del^3_{iij} \psi^{ij} - 2\sum_{j \neq i}\sum_{l\neq i,l \neq j}
		\del^3_{ilj} \psi^{lj}
	\end{align*}
	If we now make a following ansatz that each $\psi_{ij}$ solves $\kerk^{ij} = \Delta \psi^{ij}$ the formula will hold provided we put
	\[
	\kerk^{11} = \kerk^{22} = \ldots = \kerk^{nn} = -\frac{2}{n-1}\sum_{i<j} \psi^{ij}_{ij}.
	\]
	Going back to the field $S$ we might define $\kerk$ simply by $\kerk^{ij} = -s^{ij}$ for $i \neq j$ and then the elements on the diagonal by the solutions to appropriate Poisson equations (with zero boundary). The claim follows again from the Schauder estimates applied to functions $\psi^{ij}$
\end{proof}